\documentclass[11pt,a4paper]{amsart}
\usepackage{amsmath}
\usepackage{amssymb}
\usepackage{amsthm}
\usepackage{amsfonts}
\usepackage[left=3cm,top=3cm,right=3cm]{geometry}

\geometry{a4paper,centering}

\usepackage[latin1]{inputenc}
\usepackage[english]{babel}

\newtheorem{thm}{Theorem}[section]
\newtheorem{lemma}[thm]{Lemma}

\newtheorem{coro}[thm]{Corollary}
\newtheorem{proposition}[thm]{Proposition}

\theoremstyle{remark}           
\newtheorem{remark}[thm]{Remark}
\newtheorem{example}[thm]{Example}

\newcommand{\diam}{\operatorname{diam}}

\newcommand{\dist}{\operatorname{dist}}

\newcommand{\rad}{\operatorname{rad}}
\newcommand{\spt}{\operatorname{spt}}

\newcommand{\bdry}{\partial}
\newcommand{\R}{\mathbb R}
\newcommand{\Om}{\Omega}
\newcommand{\N}{\mathbb N}
\newcommand{\Z}{\mathbb Z}

\newcommand{\W}{\mathcal W}

\newcommand{\Ha}{\mathcal H}

\newcommand{\B}{\mathcal B}

\newcommand{\sub}{\subset}
\newcommand{\eps}{\varepsilon}
\newcommand{\vphi}{\varphi}
\newcommand{\wtilde}{\widetilde}

\newcommand{\emp}{\emptyset}
\newcommand{\ol}{\overline}

\newcommand{\Char}[1]{\chi_{\lower 1.5pt\hbox{$\scriptscriptstyle #1$}}}
\newcommand{\dom}{{d_{\Omega}}}


\newcommand{\lQ}{\underline{Q}}
\newcommand{\uQ}{\overline{Q}}

%
%
%
%
\def\vint{\mathop{\mathchoice%
          {\setbox0\hbox{$\displaystyle\intop$}\kern 0.22\wd0%
           \vcenter{\hrule width 0.6\wd0}\kern -0.82\wd0}%
          {\setbox0\hbox{$\textstyle\intop$}\kern 0.2\wd0%
           \vcenter{\hrule width 0.6\wd0}\kern -0.8\wd0}%
          {\setbox0\hbox{$\scriptstyle\intop$}\kern 0.2\wd0%
           \vcenter{\hrule width 0.6\wd0}\kern -0.8\wd0}%
          {\setbox0\hbox{$\scriptscriptstyle\intop$}\kern 0.2\wd0%
           \vcenter{\hrule width 0.6\wd0}\kern -0.8\wd0}}%
          \mathopen{}\int}

\newcommand{\capp}[1]{\operatorname{cap}_{#1}}

\DeclareMathOperator{\dima}{dim_A}
\DeclareMathOperator{\udima}{\overline{dim}_A}
\DeclareMathOperator{\ldima}{\underline{dim}_A}
\DeclareMathOperator{\dimh}{dim_H}

\DeclareMathOperator{\ucodima}{\overline{co\,dim}_A}
\DeclareMathOperator{\lcodima}{\underline{co\,dim}_A}
\DeclareMathOperator{\codimh}{co\,dim_H}

\DeclareMathOperator{\udimm}{\overline{dim}_M}
\DeclareMathOperator{\ldimm}{\underline{dim}_M}

\newcommand{\Lip}{\operatorname{Lip}}
\newcommand{\lip}{\operatorname{lip}}

\begin{document}

\title{Hardy inequalities and Assouad dimensions}
\author{Juha Lehrb\"ack} 

\address{University of Jyvaskyla,
Department of Mathematics and Statistics, 
P.O. Box 35 (MaD), 
FIN-40014 University of Jyvaskyla, 
Finland}
\email{\tt juha.lehrback@jyu.fi}

\thanks{The author has been supported by the Academy of Finland,  grant no.\ 252108.} 

\subjclass[2000]{Primary 26D15; Secondary 31E05, 46E35}
\keywords{Hardy inequality, Assouad dimension, Assouad codimension, 
metric space, doubling measure, Poincar\'e inequality}

\begin{abstract}
We establish both sufficient and necessary conditions for weighted Hardy inequalities in 
metric spaces in terms of Assouad (co)dimensions. Our sufficient conditions in the case
where the complement is thin are new even in Euclidean spaces, while in the case of
a thick complement we give new formulations for previously known sufficient
conditions which reveal a natural duality between these two cases. 
Our necessary conditions
are rather straight-forward generalizations from the unweighted case,
but together with some examples they
indicate the essential sharpness of our results. 
In addition, we consider the
mixed case where the complement may contain both thick and thin parts.
\end{abstract}

\maketitle

\section{Introduction}

Let $X$ be a complete metric measure space.
We say that an open set $\Omega\sub X$ \emph{admits a $(p,\beta)$-Hardy inequality},
if there exists a constant $C>0$ such that the inequality
\begin{equation*}\label{eq:hardy*}
\int_{\Omega} |u(x)|^p\, \dom(x)^{\beta-p}\,d\mu
   \leq C\int_{\Omega} g_u(x)^p \dom(x)^{\beta}\,d\mu
\end{equation*}
holds for all $u\in \Lip_0(\Omega)$  
and for all upper gradients $g_u$ of $u$.
Here $\dom(x)=\dist(x,\Omega^c)$ is
the distance from $x\in\Omega$ to 
the complement $\Omega^c=X\setminus\Omega$,  
and in the case $X=\R^n$ we have $g_u=|\nabla u|$.

There is a well-known dichotomy concerning domains admitting a Hardy inequality: 
either the complement of the domain is large (or ``thick'')  
or sufficiently ``thin''. For instance, if 
an open set $\Omega\sub\R^n$ admits a $(p,\beta)$-Hardy inequality, then
there exists $\delta>0$ such that for each ball $B\sub\R^n$ either
$\dimh(2B\cap\Omega^c)>n-p+\beta+\delta$ or
$\dima(B\cap\Omega^c)<n-p+\beta-\delta$; see \cite{KZ} (the case $\beta=0$) and \cite{LMM}. 
Here $\dimh$ denotes the Hausdorff dimension and $\dima$ is the (upper) Assouad dimension 
(see Section \ref{sect:dim} for definitions).

Reflecting this dichotomy, sufficient conditions for the validity of a $(p,\beta)$-Hardy 
inequality can be given in both of the above cases. For thick complements, a canonical 
sufficient condition for the unweighted ($\beta=0$) $p$-Hardy inequality in $\Omega$ is 
the uniform $p$-fatness of $\Omega^c$, or equivalently a uniform Hausdorff content density
condition for $\Omega^c$, see~\cite{Lewis,W,kole}. 
In $\R^n$, uniform $p$-fatness of $\Omega^c$ 
implies in particular that $\dimh(2B\cap\Omega^c)>n-p$ for all balls
centered at $\Omega^c$. On the other hand, in the case of thin complements  
the smallness of the (upper) Assouad dimension of the complement ($\dima(\Omega^c)<n-p$)
is known to be sufficient for the $p$-Hardy inequality; see \cite{KZ,LMM} and note that 
in this case these results are 
based on the works of Aikawa \cite{A,AE}. 
See also \cite{BMS, KLT, kole, LPAMS2, LS} for sufficient conditions 
for weighted Hardy inequalities and to metric space versions of such results.

The main purpose of this paper is to sharpen the previously known sufficient conditions for the 
validity of Hardy inequalities in the case where the complement is assumed 
to be thin. More precisely, we prove the following theorem in the setting of a doubling metric 
space $X$ supporting certain Poincar\'e inequalities (cf.\ Section \ref{sect:dim}). Here
the thinness is formulated in terms of the so-called
lower Assouad codimension of $\Omega^c$
(a metric space version of the (upper) Assouad dimension, see Section~\ref{sect:dim}).

\begin{thm}\label{thm:main}
Let $1 \le p<\infty$ and $\beta<p-1$, and 
assume that $X$ is an unbounded doubling metric space.
If $\beta\le 0$, we further assume that $X$ supports a $p$-Poincar\'e inequality, and
if $\beta > 0$ we assume that $X$ supports a $(p-\beta)$-Poincar\'e inequality.
If $\Omega\sub X$ is an open set satisfying 
\[\lcodima(\Omega^c)>p-\beta,\] 
then $\Omega$ admits a $(p,\beta)$-Hardy inequality.
\end{thm}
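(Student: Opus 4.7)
\medskip
\noindent\textbf{Proof plan.}
My approach is to adapt Aikawa's strategy for thin-complement Hardy inequalities to the metric and weighted setting, using the codimension hypothesis as the source of a Riesz-potential estimate and a Poincar\'e chain to produce a pointwise bound.

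First, from $\lcodima(\Omega^c)>p-\beta$ I pick $q$ with $p-\beta<q<\lcodima(\Omega^c)$. The metric-space analogue of Aikawa's characterization of codimension (which should be available earlier in the paper, or is a short consequence of the definition together with doubling) then yields a constant $C$ such that
\begin{equation*}
\int_{B(x,r)}\dom(y)^{-q}\,d\mu(y)\le C\,r^{-q}\mu\bigl(B(x,r)\bigr)
\end{equation*}
for every $x\in X$ and every $r>0$. This is the weighted integrability that will drive the $L^p$ estimate.

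Second, for $u\in\Lip_0(\Omega)$ and $x\in\Omega$, I use doubling of $\mu$ to build a telescoping chain of balls $B_k=B(x_k,r_k)$ with $r_k\approx 2^{-k}\dom(x)$, terminating in a region where $u$ vanishes (possible since $u\in\Lip_0(\Omega)$ and $X$ is unbounded). On each ball I apply the Poincar\'e inequality---of exponent $p$ when $\beta\le 0$, and of the stronger exponent $p-\beta$ when $\beta>0$---and sum telescopically. After a Hedberg-type H\"older step that introduces $q$ as a dual exponent, I expect an estimate of the form
\begin{equation*}
|u(x)|^{p}\le C\,\dom(x)^{q}\int_{\Omega}\frac{g_u(y)^{p}\,\dom(y)^{\beta}}{d(x,y)^{q}\,\mu\bigl(B(x,d(x,y))\bigr)}\,d\mu(y).
\end{equation*}

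Third, I multiply by $\dom(x)^{\beta-p}$, integrate over $\Omega$, and swap the order of integration. By Fubini the proof reduces to bounding, for each $y\in\Omega$, the inner integral
\begin{equation*}
\int_{\Omega}\frac{\dom(x)^{\beta-p+q}}{d(x,y)^{q}\,\mu\bigl(B(x,d(x,y))\bigr)}\,d\mu(x)
\end{equation*}
by a constant multiple of $\dom(y)^{-\beta}$. The key is that $\beta-p+q>0$ by the choice of $q$; splitting into dyadic annuli $d(x,y)\sim 2^k\dom(y)$ and applying the Aikawa bound of the first step on each annulus reduces the sum to a convergent geometric series. This cancels the weight on the right-hand side and delivers precisely $\int_\Omega g_u^p\,\dom^{\beta}\,d\mu$.

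The hard part is calibrating the Poincar\'e exponent against $q$ in the second step so that the H\"older step produces exactly the weight $\dom^\beta$ next to $g_u^p$, and not some other power; this calibration is precisely why the sharper $(p-\beta)$-Poincar\'e inequality is assumed in the range $\beta>0$, where the weight $\dom^\beta$ is unfavorable, whereas $\beta\le 0$ is favorable and $p$-Poincar\'e suffices. A secondary subtlety is that the Aikawa estimate of the first step is needed for balls with arbitrary centers in $X$ (not just centers on $\Omega^c$); this is a genuine structural feature of the lower Assouad codimension that is likely one of the motivations for using it as the thinness parameter in Theorem~\ref{thm:main}.
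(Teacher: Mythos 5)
Your plan does not close, and the gap is in the central pointwise estimate. The displayed bound
\begin{equation*}
|u(x)|^{p}\le C\,\dom(x)^{q}\int_{\Omega}\frac{g_u(y)^{p}\,\dom(y)^{\beta}}{d(x,y)^{q}\,\mu\bigl(B(x,d(x,y))\bigr)}\,d\mu(y)
\end{equation*}
cannot hold: in $\R^n$ the left-hand side is invariant under dilations $u\mapsto u(\cdot/\lambda)$ while the right-hand side scales like $\lambda^{\beta-p}$, so no uniform constant exists (and a Riesz-type kernel must carry a \emph{positive} power of $d(x,y)$ against $\mu(B(x,d(x,y)))^{-1}$ to be locally integrable; with $d(x,y)^{-q}$ and $q>p-\beta>0$ the $y$-integral already diverges near $y=x$). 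The Fubini step inherits the mismatch: to end with $\int_\Omega g_u^p\dom^\beta\,d\mu$ you would need the inner integral to be bounded by an absolute constant, not by $\dom(y)^{-\beta}$ as you assert; in fact the part of that integral over $\{d(x,y)\ge\dom(y)\}$ alone is comparable to $\dom(y)^{\beta-p}$ (and the part near $y$ is infinite), so the exponents never balance. Two further warning signs: your chain radii $2^{-k}\dom(x)$ shrink, whereas for a thin complement the essential part of the chain must \emph{expand} to where $u$ vanishes, and converting that outward telescoping sum into a pointwise potential bound is precisely the known obstruction --- such direct (pointwise/maximal-function) proofs are only known under additional accessibility hypotheses on $\bdry\Omega$ (cf.\ Remark~\ref{rmk:direct}). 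Finally, your argument never uses $\beta<p-1$, which is essential here and cannot be dispensed with in general.

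The actual proof runs in two stages in the spirit of Wannebo. For $\beta<0$ (Proposition~\ref{prop:hardy and assouad beta<0}) one takes maximal $2^k$-packings $\B_k$ of $\Omega^c$ at all dyadic scales, telescopes outward to infinity along nested balls centered on $\Omega^c$, and, instead of a H\"older/Hedberg step, selects from each telescoping sum a single dominant term as in~\eqref{eq: one big}; after reordering the resulting double sum, the hypothesis $\lcodima(\Omega^c)>q_2>p-\beta$ is used exactly once, to show that the balls of $\B_k$ selecting a fixed large ball $\tilde B\in\B_j$ carry total mass $\le C\mu(E_{2^k}\cap 4\tilde B)\le C(2^{k-j})^{q_1}\mu(4\tilde B)$, which makes the reordered sum a convergent geometric series. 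Crucially the constant obtained is $C^*/|\beta|$ for $\beta$ near $0$. The case $0\le\beta<p-1$ is then bootstrapped (Section~\ref{sect:main}): one applies the $(p-\beta,-\beta_0)$-Hardy inequality to $v=|u|^{p/(p-\beta)}\dom^{\beta_0/(p-\beta)}$ --- this is where the $(p-\beta)$-Poincar\'e inequality for $\beta>0$ really enters --- and absorbs the error term using that $C^*\beta_0^{-1}\cdot\beta_0^{\,p-\beta}=C^*\beta_0^{\,p-\beta-1}$ can be made small, which is exactly where $p-\beta>1$ is needed. If you want a correct direct route for $\beta\ge 0$, you would first have to overcome the obstruction noted in Remark~\ref{rmk:direct}.
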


In the unweighted case $\beta=0$, which is the most important and most interesting,
Theorem~\ref{thm:main} shows that a $p$-Hardy inequality 
holds in $\Omega$ under the assumptions that $X$ supports a $p$-Poincar\'e inequality 
and $\lcodima(\Omega^c)>p>1$;
in a $Q$-regular space the latter condition is equivalent to $\dima(\Omega^c)<Q-p$.
In particular, this gives a complete answer to a question of 
Koskela and Zhong~\cite[Remark~2.8]{KZ}. See also Corollary~\ref{coro:nonzero bdry}
for an improvement concerning the boundary values of test-functions in
Theorem~\ref{thm:main}.

In $\R^n$, the case $\beta=0$ of Theorem~\ref{thm:main} coincides with
the above-mentioned results from \cite{KZ,LMM}, but our approach gives
a completely new proof is this case. For $\beta\neq 0$ the result is new even in Eulidean spaces. 
Our proof of Theorem~\ref{thm:main} follows the general scheme of Wannebo~\cite{W}:
We first prove $(p,\beta)$-Hardy inequalities for $\beta<0$,
with a suitable control for the constants in the inequalities for $\beta$ close to $0$, and 
then elementary --- but slightly technical --- integration tricks yield the 
inequalities for $0\le\beta<p-1$.

Another goal of this work is to
bring together much of the recent research on Hardy inequalities (see
e.g.\ \cite{KLT,kole,KZ,LMM,lesi,lene,LPAMS2,LS,LT}) in a unified manner in the setting of metric spaces.
For instance, 
it was shown in~\cite{LPAMS2} that 
an open set $\Omega\sub X$ admits a $(p,\beta)$-Hardy inequality 
if the complement $\Omega^c$ satisfies a uniform density condition in terms of 
a Hausdorff content of codimension $q<p-\beta$.   
In the present paper, we establish a new characterization for the upper Assouad codimension 
by means of Hausdorff co-content density, see Corollary~\ref{coro:h char of ucodima}. 
(In Ahlfors regular spaces, such a characterization was observed in~\cite[Remark~3.2]{KLV}.)
Consequently, we obtain the following sufficient 
condition for Hardy inequalities in terms of the upper Assouad codimension, 
which provides a natural counterpart for Theorem~\ref{thm:main}
and shows that there exists a nice ``duality'' between the 
sufficient conditions for the 
cases of thick and thin complements.

\begin{thm}\label{thm:main fat}
Let $1 < p<\infty$ and $\beta<p-1$, and 
assume that $X$ is a doubling metric space  
supporting a $p$-Poincar\'e inequality if $\beta\le 0$, and
a $(p-\beta)$-Poincar\'e inequality if $\beta > 0$.
Let $\Omega\sub X$ be an open set satisfying 
\[\ucodima(\Omega^c)<p-\beta,\]
and, in case $\Omega$ is unbounded, we require in addition that $\Omega^c$
is unbounded as well.
Then $\Omega$ admits a $(p,\beta)$-Hardy inequality. 
\end{thm}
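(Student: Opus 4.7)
The plan is to combine the new characterization of the upper Assouad codimension by Hausdorff co-content density (Corollary~\ref{coro:h char of ucodima}) with the sufficient condition for weighted Hardy inequalities established in~\cite{LPAMS2} via such a density condition; both are already available in the present metric measure space setting, so the proof reduces to threading them together.

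First, since $\ucodima(\Omega^c)<p-\beta$, I fix an auxiliary exponent $q$ with
\[
\ucodima(\Omega^c)<q<p-\beta.
\]
By Corollary~\ref{coro:h char of ucodima}, the bound $\ucodima(\Omega^c)<q$ is equivalent to a uniform Hausdorff co-content density condition of codimension $q$ for $\Omega^c$, namely there exists $c>0$ such that
\[
\Ha^{q}_{\infty}(B(x,r)\cap\Omega^c)\ge c\,r^{-q}\mu(B(x,r))
\]
for every $x\in\Omega^c$ and every admissible radius $r>0$, where $\Ha^q_\infty$ denotes the Hausdorff co-content of codimension $q$. The additional hypothesis that $\Omega^c$ be unbounded when $\Omega$ is unbounded is precisely what guarantees that this density condition is available at arbitrarily large scales, so that no artificial upper cutoff on $r$ is needed.

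It remains to invoke the result of~\cite{LPAMS2}, which asserts that under the standing doubling assumption on $X$ and the Poincar\'e hypothesis dictated by the sign of $\beta$ (a $p$-Poincar\'e inequality if $\beta\le 0$, a $(p-\beta)$-Poincar\'e inequality if $\beta>0$), a uniform Hausdorff co-content density condition for $\Omega^c$ at any codimension $q<p-\beta$ implies a $(p,\beta)$-Hardy inequality in $\Omega$. Feeding the density from the previous step into this result closes the argument. The principal work is therefore concentrated in the auxiliary step, that is, in establishing Corollary~\ref{coro:h char of ucodima}: once the equivalence between the codimensional bound $\ucodima(\Omega^c)<q$ and the uniform content density is in hand, Theorem~\ref{thm:main fat} follows almost immediately, which is precisely what makes the duality with Theorem~\ref{thm:main}---where the thin case is governed by the lower codimension $\lcodima(\Omega^c)$---so transparent.
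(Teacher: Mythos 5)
Your proposal takes essentially the same route as the paper and is correct in outline: from $\ucodima(\Omega^c) < p - \beta$, derive a Hausdorff content density condition at some codimension $q < p - \beta$ via Lemma~\ref{lemma:a to h} (Corollary~\ref{coro:h char of ucodima} being the repackaged version), and then invoke the sufficient condition of~\cite{LPAMS2}. Your remark on why the unboundedness of $\Omega^c$ is required when $\Omega$ is unbounded is also on target.

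Where you are too quick is the final step. The theorem you cite from~\cite{LPAMS2} is formulated for a \emph{different} density condition --- an inner boundary density of $\bdry\Omega$ measured at the scale $\dom(x)$, with the distance taken to $\bdry\Omega$ --- not the complement density at all centers $w\in\Omega^c$ and all radii that Lemma~\ref{lemma:a to h} produces. Closing that gap is, in effect, the entire content of the paper's proof of Theorem~\ref{thm:main fat}: one uses quasiconvexity (a consequence of the Poincar\'e inequality) to identify $\dist(\cdot,\bdry\Omega)\simeq\dist(\cdot,\Omega^c)$; one then re-inspects the proof of Lemma~3.1(a) of~\cite{LPAMS2} to check that the complement density suffices there, which gives the case $\beta\le 0$, and the case $\beta>0$ is then obtained by the same integration trick as in Section~\ref{sect:main}; and one notes that the arguments of~\cite{LPAMS2} require a $p_0$-Poincar\'e inequality for some $p_0<p$, which is supplied by the Keith--Zhong self-improvement theorem. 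In addition, the paper explicitly chooses the auxiliary exponent $q$ with $1<q<p-\beta$ --- possible precisely because $p-\beta>1$ --- whereas your proposal allows any $q<p-\beta$, including sub-$1$ values; that lower bound on $q$ is part of the hypotheses under which the machinery of~\cite{LPAMS2} operates, and it is a concrete place where the assumption $\beta<p-1$ enters. None of this changes the architecture of your argument, but the details you wave through as "almost immediate" are the actual technical work in this proof.
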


Since the Euclidean space $\R^n$ is $n$-regular and
supports $p$-Poincar\'e inequalities whenever $1\le p<\infty$, 
for $X=\R^n$ the results of Theorems~\ref{thm:main} 
and~\ref{thm:main fat} can be formulated as follows:

\begin{coro}\label{coro:hardy in rn}
 Let $1 < p<\infty$ and $\beta<p-1$, and let $\Omega\sub\R^n$
 be an open set. If
 \[
 \udima(\Omega^c) < n -p + \beta \quad\text{ or }\quad 
 \ldima(\Omega^c) > n -p + \beta,
 \]
 then $\Omega$ admits a $(p,\beta)$-Hardy inequality; 
 in the latter case, if $\Omega$ is unbounded, then we require that also $\Omega^c$ is unbounded.
\end{coro}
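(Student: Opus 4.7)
The plan is to reduce Corollary~\ref{coro:hardy in rn} directly to Theorems~\ref{thm:main} and~\ref{thm:main fat}. First I record the standard ``Euclidean'' input: $\R^n$ with Lebesgue measure is $n$-regular, hence doubling, supports a $q$-Poincar\'e inequality for every $1\le q<\infty$ (so in particular both a $p$- and a $(p-\beta)$-Poincar\'e inequality), and is unbounded. Hence all metric-space hypotheses of both theorems are satisfied automatically, regardless of the sign of $\beta$.

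The only genuinely new ingredient is the duality between Assouad dimensions and Assouad codimensions in an $n$-regular space: for any nonempty $E\sub\R^n$,
\[
\lcodima(E) = n - \udima(E), \qquad \ucodima(E) = n - \ldima(E).
\]
These identities follow by unwinding the definitions recorded in Section~\ref{sect:dim} (in the Ahlfors-regular setting they are noted in \cite[Remark~3.2]{KLV}, already cited above). The bookkeeping point is that ``lower codimension'' pairs with ``upper dimension'' and vice versa, and that strict inequalities are preserved under the correspondence.

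With these identities in hand the two hypotheses of the corollary convert as
\[
\udima(\Omega^c) < n-p+\beta \iff \lcodima(\Omega^c) > p-\beta,
\]
which is precisely the hypothesis of Theorem~\ref{thm:main}, and
\[
\ldima(\Omega^c) > n-p+\beta \iff \ucodima(\Omega^c) < p-\beta,
\]
which is precisely the hypothesis of Theorem~\ref{thm:main fat}. In the thin-complement case, Theorem~\ref{thm:main} directly yields the $(p,\beta)$-Hardy inequality in $\Omega$. In the thick-complement case, the additional assumption that $\Omega^c$ be unbounded whenever $\Omega$ is unbounded matches the side condition of Theorem~\ref{thm:main fat}, and the conclusion again follows.

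I do not anticipate a real obstacle: once the two main theorems are available, the entire corollary is formal. The only step requiring any care is verifying the codimension-dimension identities in $\R^n$, making sure that upper and lower are swapped correctly and that the strict inequalities pass through. Everything else is a direct application of the two theorems.
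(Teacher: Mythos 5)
Your proposal is correct and matches the paper's own treatment: the corollary is stated there as an immediate reformulation of Theorems~\ref{thm:main} and~\ref{thm:main fat}, using that $\R^n$ is $n$-regular, unbounded, and supports $q$-Poincar\'e inequalities for all $1\le q<\infty$, together with the identities $\udima(E)=Q-\lcodima(E)$ and $\ldima(E)=Q-\ucodima(E)$ recorded in Section~\ref{sect:dim}. Nothing further is needed.
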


Here $\udima=\dima$ is the (upper) Assouad dimension and 
$\ldima$ is the lower Assouad dimension (see Section~\ref{sect:dim}).
In fact, Corollary~\ref{coro:hardy in rn} holds, with $n$ replaced with $Q$,
in any $Q$-regular metric space supporting a $1$-Poincar\'e inequality;
prime examples of such spaces are the Carnot groups.
Let us mention here that in the recent work~\cite{DV}, 
which has been prepared independently of the present paper,
the authors establish similar sufficient conditions for \emph{fractional}
Hardy inequalities in $\R^n$.

The sharpness of Theorems~\ref{thm:main} and~\ref{thm:main fat} will be discussed
in detail in Section~\ref{sect:sharp examples}, but let us mention here some of the
relevant facts. First of all, the bound $p-\beta$ for the codimensions is very
natural and sharp. Indeed, we show in Theorem~\ref{thm:dich for bdry} that if
$\Omega$ admits a $(p,\beta)$-Hardy inequality, then either
$\codimh(\Omega^c) < p-\beta$ or $\lcodima(\Omega^c) > p-\beta$,
and it is clear that for sufficiently regular $\Omega^c$ we have 
$\codimh(\Omega^c)=\ucodima(\Omega^c)$.
Nevertheless, it is not necessary for a $(p,\beta)$-Hardy inequality that either
$\ucodima(\Omega^c) < p-\beta$ or $\lcodima(\Omega^c) > p-\beta$, since 
suitable local combinations of the assumptions in 
Theorems~\ref{thm:main} and~\ref{thm:main fat} 
yield sufficient conditions for Hardy inequalities as well (cf.\ Section~\ref{sect:combo}), 
and in such cases typically only the bound
$\codimh(\Omega^c) < p - \beta$ is satisfied.
There is also a corresponding local dimension dichotomy for Hardy inequalities,
see Theorem~\ref{thm:KZ}.

The requirement $p-\beta>1$ is also sharp in both of the theorems. For instance,
the unit ball $B=B(0,1)\sub\R^n$ gives a simple counterexample for 
the result of Theorem~\ref{thm:main fat} in the case $0< p-\beta \le 1$,
since $\ucodima(\R^n\setminus B) = 0$, but $B$ admits $(p,\beta)$-Hardy inequalities
only when $p-\beta>1$. However, under additional conditions on $\Omega$ 
these Hardy inequalities can also be proven in the case $p-\beta\le 1$, 
see Remark~\ref{rmk:direct} and the discussion in
Section~\ref{sect:sharp examples}.
The unboundedness assumptions of $X$ and $\Omega^c$ in Theorems~\ref{thm:main}
and~\ref{thm:main fat}, respectively, can not be relaxed either,
as the examples at the end of Section~\ref{sect:sharp examples} show. 
The only assumption whose role is not completely understood at the moment is the 
$(p-\beta)$-Poincar\'e inequality in the cases $\beta>0$ of the theorems;
a $p$-Poincar\'e inequality is certainly necessary in any of the cases.
See Remark~\ref{rmk:direct} for a related discussion.

Part of the motivation for the present work stems from the connection between Hardy inequalities 
and the so-called quasiadditivity property of the 
variational capacity. Some aspects of such a connection have been visible e.g.\ in~\cite{A2,AE,KZ,LMM}, 
but only recently it was shown in~\cite{LS} that quasiadditivity of the $p$-capacity with respect 
to $\Omega$ and the validity of a $p$-Hardy inequality in $\Omega$ are essentially equivalent conditions 
(under some mild assumptions on the space $X$ or the open set $\Omega$). 
An assumption equivalent to the
dimension bound $\lcodima(E)>p$ (or rather $\udima(E)<n-p$) was used already by Aikawa~\cite{A}
in connection to the quasiadditivity of the Riesz capacity $R_{1,p}$ with respect to
Whitney decompositions of the complement $\R^n\setminus E$. In order to obtain a corresponding result for
the variational $p$-capacity (in metric spaces),
the unweighted case $\beta=0$ of Theorem~\ref{thm:main}
was deduced in~\cite[Prop.\ 3]{LS} under an additional accessibility condition
(note that the lower Assouad codimension is called the \emph{Aikawa codimension} in~\cite{LS}
and in~\cite{LT}, see Section~\ref{sect:dim} for a discussion). 
Now Theorem~\ref{thm:main} makes such an additional condition unnecessary, and thus we 
have the following corollary to Theorem~\ref{thm:main} and~\cite[Thm 1]{LS},
yielding a complete analogy with the results of Aikawa~\cite{A,AE}. 

\begin{coro}\label{coro:qadd}
 Let $1 < p<\infty$, and 
 assume that $X$ is an unbounded doubling metric space supporting a $p$-Poincar\'e inequality.
 If $\Omega\sub X$ is an open set satisfying $\lcodima(\Omega^c)>p$, 
 then the variational $p$-capacity $\capp{p}(\cdot,\Omega)$ is quasiadditive with respect to Whitney
 covers $\W_c(\Omega)$ for suitably small parameters $c>0$.
\end{coro}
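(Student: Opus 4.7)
\medskip

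\noindent\textbf{Proof plan.} The plan is to combine Theorem~\ref{thm:main} (in its unweighted case $\beta = 0$) with the equivalence between Hardy inequalities and quasiadditivity of the variational $p$-capacity established in \cite[Theorem~1]{LS}; there is essentially nothing new to prove once these two inputs are in place.

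First, I would apply Theorem~\ref{thm:main} with $\beta = 0$. The hypotheses of the corollary --- that $X$ is an unbounded doubling metric space supporting a $p$-Poincar\'e inequality, together with the codimension bound $\lcodima(\Omega^c) > p$ --- are exactly the assumptions of Theorem~\ref{thm:main} for $\beta = 0$ (note $p - \beta = p > 1$, so the restriction $\beta < p - 1$ is satisfied). The conclusion is that $\Omega$ admits the unweighted $p$-Hardy inequality.

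Second, I would invoke \cite[Theorem~1]{LS}, which asserts, under the same standing doubling and $p$-Poincar\'e assumptions on $X$, that the validity of a $p$-Hardy inequality on $\Omega$ implies the quasiadditivity of $\capp{p}(\cdot,\Omega)$ with respect to Whitney covers $\W_c(\Omega)$ for all sufficiently small parameters $c > 0$. Applied to the Hardy inequality produced in the previous step, this yields the desired quasiadditivity.

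The only genuine subtlety --- and the reason this corollary represents an advance over earlier work --- is that \cite[Proposition~3]{LS} previously derived the $p$-Hardy inequality from the bound $\lcodima(\Omega^c) > p$ only under an additional accessibility assumption on $\Omega^c$. Theorem~\ref{thm:main} removes that extra hypothesis, and thus the codimension condition alone suffices to feed the quasiadditivity machine of \cite{LS}. In this sense the main obstacle is not in the present deduction but has already been overcome inside Theorem~\ref{thm:main}; here the argument is simply the two-step chain codimension bound $\Longrightarrow$ Hardy inequality $\Longrightarrow$ quasiadditivity.
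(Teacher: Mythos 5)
Your proposal is correct and matches the paper's own derivation: the corollary is stated explicitly as a consequence of Theorem~\ref{thm:main} (with $\beta=0$) combined with~\cite[Thm.~1]{LS}, which is precisely the two-step chain you describe. Your remark on the role of Theorem~\ref{thm:main} in removing the accessibility hypothesis of~\cite[Prop.~3]{LS} also coincides with the paper's discussion.
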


We refer to~\cite{LS} for all the relevant definitions.
Let us also point out that the proof of the corresponding Hardy inequalities in~\cite{LS} 
is more straight-forward than the proof of Theorem~\ref{thm:main} here, and so 
the proof from~\cite{LS} may 
actually be preferred in the cases where the accessibility condition is known to hold.

The organization of the rest of the paper is as follows. 
In Section~\ref{sect:dim} we recall the necessary background material
concerning metric spaces and the various notions of dimension.
Section~\ref{sect:negative beta} contains a proof of the case $\beta<0$
of Theorem~\ref{thm:main}, and the case $0\le\beta<p-1$ is then
established in the following Section~\ref{sect:main}. 
The relation between Hausdorff (co)content
density and the upper Assouad codimension is studied in Section~\ref{sect:fat}
with the help of a measure distribution procedure. 
This section also contains the proof of Theorem~\ref{thm:main fat}.
The necessary conditions for Hardy inequalities are the
topic of Section~\ref{sect:nec}. Finally,
in Section~\ref{sect:combo} we discuss the case where the complement
contains both thick and thin parts, and in 
Section~\ref{sect:sharp examples} we give examples which indicate the
sharpness of our assumptions.

For the notation we remark that $C$ and $c$ 
will denote positive constants whose values are
not necessarily the same at each occurrence. 
If there exist constants $c_1,c_2>0$ such that $c_1\,F\le
G\le c_2F$, we sometimes write $F\simeq G$ and say that $F$ and $G$ are comparable.

\section{Metric spaces and concepts of dimension}\label{sect:dim}

We assume 
throughout this paper that $X=(X,d,\mu)$ is a complete metric measure space, where
$\mu$ is a Borel measure supported on $X$, with $0<\mu(B)<\infty$ whenever 
$B=B(x,r):=\{y\in X : d(x,y)\le r\}$ is a (closed) ball in $X$. 
In addition, we assume 
that $\mu$ is \emph{doubling}, that is, there is a constant $C>0$ such that 
whenever $x\in X$ and $r>0$, we have
\[
  \mu(B(x,2r))\le C\, \mu(B(x,r)).
\]
The completeness of $X$ is actually not needed in all of our results,
but for simplicity we still keep this as a standing assumption.
We also make the tacit assumption that each ball $B\sub X$ has a fixed center $x_B$ 
and radius $\rad(B)$ (but these need not be unique), 
and thus notation such as $\lambda B = B(x_B,\lambda \rad(B))$ 
is well-defined for all $\lambda>0$. The diameter of a set $E\sub X$ is denoted $\diam(E)$,
the distance from a point $x$ to $E$ is $\dist(x,E)$,
and $\Char{E}$ denotes the characteristic function of $E$.

We also say that the measure $\mu$ is $Q$-regular, if there is a constant $C \ge 1$ such that
\[
  C^{-1}r^Q \le \mu(B(x,r)) \le Cr^Q
\]
for all $x \in X$ and every $0 < r < \diam(X)$.

Given
a measurable function $f\colon X\to[-\infty,\infty]$, 
a Borel measurable non-negative function $g$ on $X$
is an \emph{upper gradient} of $f$ if whenever $\gamma$ is a compact rectifiable curve in $X$, we have
\[
  |f(y)-f(x)|\le \, \int_\gamma g\, ds.
\]
Here $x$ and $y$ are the two endpoints of $\gamma$, and the above condition should be interpreted 
as claiming that
$\int_\gamma g\, ds=\infty$ whenever at least one of $|f(x)|, |f(y)|$ is infinite. 
See e.g.~\cite{BB,HEI,HKST} for introduction on analysis on metric spaces
based on the notion of upper gradients.

In addition to the doubling property,
we will also assume throughout the paper that the space $X$ supports a $(1,p)$-Poincar\'e 
inequality (or simply $p$-Poincar\'e inequality) for $1\le p<\infty$,
that is, there exist constants 
$C>0$ and $\lambda\ge 1$ such that whenever $B=B(x,r)\subset X$ and $g$ is an upper gradient 
of a measurable function $f$, we have
\[
    \vint_{B} |f-f_B|\, d\mu \le C\, r\, \left(\,\vint_{\lambda B}g^p\, d\mu\right)^{1/p}
\]
where 
\[
   f_B:=\frac{1}{\mu(B)}\, \int_B f\, d\mu =:\vint_{B}\, f\, d\mu.
\]
To be more precise, we keep a $p$-Poincar\'e inequality as a standing assumption,
but as was already seen in Theorems~\ref{thm:main} and~\ref{thm:main fat},
we occasionally require even stronger Poincar\'e inequalities.

Moreover, we will rely in some of our formulations on the
fundamental result of Keith and Zhong~\cite{KeZ} 
on the self-improvement of Poincar\'e inequalities:
If $1<p<\infty$ and a complete doubling metric space $X$ supports a $p$-Poincar\'e inequality,
then there exist $1\le p_0<p$ such that $X$ supports also
a $p_0$-Poincar\'e inequality, and hence actually $p'$-Poincar\'e inequalities
for all $p'\ge p_0$.

One consequence of Poincar\'e inequalities for the geometry of $X$ is that
a space supporting a $p$-Poincar\'e inequality is \emph{quasiconvex}.
This means that there exists $C\ge 1$ such that
each pair of points $x,y\in X$ can be joined using
a rectifiable curve $\gamma_{x,y}$ of length
$\ell(\gamma_{x,y})\le C d(x,y)$; see e.g.~\cite{BB} for details.
From quasiconvexity we obtain the useful fact  
that if $\Omega\sub X$ is an open set, then
$\dom(x):=\dist(x,\Omega^c)\le C \dist(x,\bdry\Omega)\le C\dom(x)$
for all $x\in\Omega$.

Let $\Omega\sub X$.
A function $u\colon \Omega\to \R$ is said to be \emph{($L$-)Lipschitz}, if
\[
|u(x)-u(y)|\leq L d(x,y)\qquad \text{ for all } x,y\in \Omega.
\]
We denote the set of all Lipschitz functions $u\colon \Omega\to\R$
by $\Lip(\Omega)$. 
In addition, $\Lip_0(\Omega)$ (resp.\ $\Lip_b(\Omega)$) denotes 
the set of Lipschitz functions 
with compact (resp.\ bounded) support in $\Omega$.
Recall that the support of a function $u\colon \Omega\to \R$,
denoted $\spt(u)$, is the closure of the set where $u$ is non-zero.

The \emph{upper and lower pointwise Lipschitz constants} 
of a function $u\colon \Omega\to\R$ at $x\in\Omega$ are
\[
\Lip(u;x)=\limsup_{y\to x} \sup_{y\in B(x,r)} \frac{|u(x)-u(y)|}{r}
\]
and
\[
\lip(u;x)=\liminf_{y\to x} \sup_{y\in B(x,r)} \frac{|u(x)-u(y)|}{r},
\]
respectively.
It is not hard to see that both of these
are upper gradients of a (locally) Lipschitz function
$u\colon \Omega\to \R$ (cf.~\cite[Proposition~1.14]{BB}). 
In $\R^n$, on the other hand, $|\nabla u|$ is a
(minimal weak) upper gradient of $u\in\Lip(\R^n)$, 
consult e.g.\ \cite{BB} for the result and the terminology.

Let us now recall the various notions of dimension that will be important for us throughout the paper.
Let $E\sub X$. The \emph{(upper) Assouad dimension} of $E$, denoted $\udima(E)$ (or simply $\dima(E)$), 
is the infimum of exponents $s\ge 0$ for which 
there is a constant $C \ge 1$ such that for all $x\in E$ and every $0<r<R<\diam(X)$,
the set $E\cap B(x,R)$ can be covered by at most $C(r/R)^{-s}$ balls of radius $r$.
Notice that for $\diam(E)\le r<\diam(X)$ this condition is trivial. 
We remark that this upper Assouad dimension is the ``usual'' Assouad dimension
found in the literature.
See Luukkainen~\cite{luukas} for the basic properties and a historical account on the (upper) Assouad dimension.

Conversely to the above definition, in~\cite{KLV} the \emph{lower Assouad dimension} of $E$, $\ldima(E)$, 
was defined to be the supremum of exponents $t\ge 0$ for which 
there is a constant $c>0$ so that if $0<r<R<\diam(E)$, then for every $x \in E$ at least $c(r/R)^{-t}$ balls of radius $r$ are needed to cover $E\cap B(x,R)$; if $\diam(E)=0$, we omit the upper bound for $R$. 
Closely related concepts have been considered e.g.\ by Larman~\cite{Larman}
and Farser~\cite{Fraser}, but an important difference in our definition
is that we consider all radii $0<r<\diam(E)$, not just small radii; in the 
context of Hardy inequalities this turns out to be essential.

For comparison, recall that the \emph{upper Minkowski dimension} 
of a compact $E\sub X$, denoted
$\udimm(E)$, is the infimum of $\lambda\ge 0$ such that the whole set
$E$ can be covered by at most $C r^{-\lambda}$ balls of radius $0<r<\diam(E)$,
and the \emph{lower Minkowski dimension}, $\ldimm(E)$, 
is the supremum of $\lambda\ge 0$ for which  
at least $c r^{-\lambda}$ balls of radius $0<r<\diam(E)$ are needed to cover $E$.
It follows immediately that 
$\ldima(E) \le \ldimm(E) \le \udimm(E) \le \udima(E)$.

Since a doubling metric space is separable, there exists 
for all $r>0$ a maximal $r$-packing of $E\sub X$, 
that is, a countable collection $\B$ of pairwise disjoint balls 
$B(x_i,r)$, with $x_i\in E$, such that for each $x \in E$ there is $B \in \B$ 
intersecting $B(x,r)$. It is obvious that if $\{ B_i \}_i$ is a 
maximal packing of $E$, then $\{ 2B_i \}_i$ is a cover of $E$.

When working in a (non-regular) metric space $X$, 
it is often convenient to describe the sizes of sets in terms of 
\emph{codimensions} rather than dimensions. For instance, the 
\emph{Hausdorff codimension} of $E\sub X$ 
(with respect to $\mu$) is the number
\[\codimh(E) = \sup\big\{q\geq 0 : \Ha_R^{\mu,q}(E)=0\big\},\]
where 
\[
\Ha_R^{\mu,q}(E) = \inf\bigg\{ \sum_{k} \rad(B_k)^{-q}\mu(B_k) : E \subset \bigcup_{k} B_k,\ 
                               \rad(B_k) \le R \bigg\}
\] 
is the \emph{Hausdorff content of codimension $q$};
if $\mu(E)>0$, then we set $\codimh(E)=0$.
If $\mu$ is $Q$-regular, 
then we have for all $E\sub X$ that $Q\,-\,\codimh(E)=\dimh(E)$, the usual Hausdorff dimension.

We define next the Assouad codimensions following~\cite{KLV}: 

When $E\sub X$ and $r>0$, the (open) $r$-neighborhood of $E$ is the set 
$E_r=\{x\in X:\dist(x,E)<r\}$.
The \emph{lower Assouad codimension}, denoted $\lcodima(E)$, is the supremum of
all $t \ge 0$ for which there exists a constant $C \ge 1$ such that
\begin{equation*}\label{eq:bouli*}
\frac{\mu(E_r\cap B(x,R))}{\mu(B(x,R))}\le C\Bigl(\frac r R\Bigr)^t
\end{equation*}
for every $x\in E$ and all $0<r<R<\diam(X)$.
Conversely, the \emph{upper Assouad codimension} of $E\sub X$,
denoted $\ucodima(E)$,
is the infimum of all $s \ge 0$ for which
there is $c>0$ such that
\begin{equation*}\label{eq:bouliconv*}
\frac{\mu(E_r \cap B(x,R))}{\mu(B(x,R))} \geq c\Bigl(\frac r R\Bigr)^s
\end{equation*}
for every $x\in E$ and all $0<r<R<\diam(E)$. If $\diam(E)=0$, we omit the upper bound for $R$.

If $\mu$ is $Q$-regular, then it is not hard to see that
\begin{equation*}\label{eq:dimai ja codima*}
\udima(E)  = Q - \lcodima(E)\quad\text{ and }\quad
\ldima(E)  = Q - \ucodima(E)
\end{equation*}
for all $E\sub X$ (cf.~\cite{KLV}).

\begin{remark}\label{rmk:LT}
It was shown in~\cite[Thm.~5.1]{LT} that the
lower Assouad codimension can also be characterized as
the supremum of all $q\geq 0$ for which there exists a constant $C \ge 1$ such that
\begin{equation}\label{eq:aikawa}
  \int_{B(x,r)} \dist(y,E)^{-q}\,d\mu(y) \le C r^{-q}\mu(B(x,r))
\end{equation}
for every $x\in E$ and all $0<r<\diam(X)$.
(Here we interpret the integral to be $+\infty$ if $q>0$ and $E$ has positive measure.)
\end{remark}

A concept of dimension defined via integrals as in~\eqref{eq:aikawa}
was used by Aikawa in~\cite{A} for subsets of $\R^n$
(see also~\cite{AE}).
Thus, in~\cite{LS,LT}, where the interest originates from such
integral estimates, the lower Assouad codimension was called
the \emph{Aikawa codimension}. We will see later, especially in Section~\ref{sect:nec},
that this kind of integral estimates arise very naturally in 
connection to Hardy inequalities.

The next lemma records the fact that
the Aikawa condition~\eqref{eq:aikawa} enjoys
self-improvement. This property is
a direct consequence of the famous 
self-improvement result for reverse
H\"older inequalities (in $\R^n$ due to
Gehring~\cite{geh}), and we will need this in Section~\ref{sect:nec}
when proving our necessary conditions 
for Hardy inequalities.
For $q>1$, the result of
Lemma~\ref{lemma:aikawa si} is
contained in the proof of Lemma~2.4 in~\cite{KZ},
and in the proof of Proposition~4.3 in~\cite{IV2} the
same fact is used in $\R^n$.

\begin{lemma}\label{lemma:aikawa si}
 Let $0<R_0\le \infty$ and assume that $E\sub X$ 
 satisfies the Aikawa condition~\eqref{eq:aikawa}
 for every $x\in E$ and all $0<r<R_0$ with an exponent $q>0$
 and a constant $C_0>0$. 
 Then there exist $\delta>0$ and $C>0$, depending only on the 
 given data, such that condition~\eqref{eq:aikawa} holds
 for every $x\in E$ and all $0<r<R_0$ with the exponent $q+\delta$
 and the constant $C$. 
\end{lemma}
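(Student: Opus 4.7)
My plan is to realize Lemma~\ref{lemma:aikawa si} as an instance of Gehring's classical self-improvement of reverse H\"older inequalities, transferred to the doubling metric measure setting.

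The starting point is the observation that the Aikawa assumption, together with the trivial lower bound $\dist(y,E) \le r$ for $y \in B(x,r)$ (so that the weight $w := \dist(\cdot, E)^{-q}$ satisfies $w \ge r^{-q}$ on $B(x,r)$ whenever $x \in E$), gives the two-sided estimate
\[
r^{-q} \le \vint_{B(x,r)} w\, d\mu \le C_0\, r^{-q}
\]
for every $x \in E$ and $0 < r < \diam(E)$.

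The decisive step is then to verify a weak reverse H\"older inequality
\[
\left( \vint_{B} w^{1+\eta_0}\, d\mu \right)^{1/(1+\eta_0)} \le K\, \vint_{\sigma B} w\, d\mu
\]
for some $\eta_0 > 0$ and $\sigma \ge 1$, valid for every ball $B \sub X$. Balls $B = B(z,\rho)$ with $\dist(z,E) \ge 2\rho$ are trivial, since $w$ varies by only a bounded factor on $B$; for balls meeting a $2\rho$-neighborhood of $E$ I would enclose $B$ in $B(x, 3\rho)$ with $x \in E$ and transfer the Aikawa estimate to $B$ via doubling. With this in hand, the metric-space version of Gehring's lemma upgrades the reverse H\"older exponent from $1+\eta_0$ to some $1+\eta_1$ with $\eta_1 > \eta_0$. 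Specializing the improved inequality to balls $B(x,r)$ centered at $x \in E$ and using the upper bound from the first step yields
\[
\vint_{B(x,r)} \dist(y,E)^{-q(1+\eta_1)}\, d\mu \le C\, r^{-q(1+\eta_1)},
\]
i.e.\ the Aikawa condition with improved exponent $q + \delta$, where $\delta := q\eta_1 > 0$.

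The main obstacle is the weak reverse H\"older of the second step. A pure layer-cake estimate based on the Chebyshev-derived density bound $\mu(E_\tau \cap B(x,r))/\mu(B(x,r)) \lesssim (\tau/r)^q$ merely reproduces the original exponent $q$ and closes no gap; a genuine gain requires a more delicate stopping-time or Whitney-type argument that exploits the uniformity of the Aikawa bound across all scales. This combinatorial input is precisely what underlies Gehring's theorem, and constitutes the technical heart of the argument; the comparison between the codimension-density formulation and the integral formulation of Aikawa's condition, already recorded in Remark~\ref{rmk:LT}, is what allows one to package the argument in this clean form.
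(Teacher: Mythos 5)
Your overall strategy --- recast the Aikawa condition as a reverse H\"older inequality and invoke the metric-space Gehring lemma --- is the right one and is what the paper does, but the way you set up the reverse H\"older inequality inverts the logic and leaves the key step unproven. You propose to first verify
\[
\Bigl( \vint_{B} w^{1+\eta_0}\, d\mu \Bigr)^{1/(1+\eta_0)} \le K\, \vint_{\sigma B} w\, d\mu,
\qquad w=\dist(\cdot,E)^{-q},
\]
for some $\eta_0>0$, and only afterwards apply Gehring to improve $\eta_0$ to $\eta_1$. But this displayed inequality, tested on balls centered at $E$ and combined with your upper bound $\vint_{\sigma B} w\,d\mu \le C r^{-q}$, \emph{is already} the conclusion of the lemma with $\delta=q\eta_0$; if you could prove it directly, Gehring would be superfluous. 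Conversely, it cannot be obtained by ``transferring the Aikawa estimate to $B$ via doubling'': the hypothesis controls $\int_B w\,d\mu$ only and says nothing about $\int_B w^{1+\eta_0}\,d\mu$. Your closing paragraph concedes that this step is the obstacle and defers it to ``the combinatorial input underlying Gehring's theorem,'' but Gehring's lemma takes a reverse H\"older inequality as \emph{input} and improves its exponent; it does not manufacture one from a two-sided bound on averages. As written, the argument is circular at its core.

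The repair is to \emph{lower}, not raise, the exponent before invoking Gehring. Fix $0<s<q$ (say $s=q/2$) and set $f=\dist(\cdot,E)^{-s}$. For a ball $B$ of radius $\rho$ with $4B\cap E\neq\emptyset$, enclose $B$ in a ball $B'$ centered at $E$ of comparable radius; the hypothesis gives $\int_{B} f^{q/s}\,d\mu \le \int_{B'}\dist(y,E)^{-q}\,d\mu\le C\mu(B)\rho^{-q}$, while the trivial pointwise bound $f\ge c\rho^{-s}$ on $2B$ gives $\rho^{-q}\le C\bigl(\vint_{2B}f\,d\mu\bigr)^{q/s}$. Combining these (and handling balls with $4B\cap E=\emptyset$ trivially, since there $w$ oscillates by a bounded factor, as you note) yields the genuine, non-circular weak reverse H\"older inequality
\[
\Bigl(\vint_{B} f^{q/s}\,d\mu\Bigr)^{s/q} \le C\,\vint_{2B} f\,d\mu
\]
for all balls $B$ contained in a fixed ball $B_0$ centered at $E$. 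Gehring then raises the exponent $q/s>1$ to $(q+\delta)/s$ for some $\delta>0$ depending only on the data, and evaluating on balls centered at $E$ recovers the Aikawa condition with exponent $q+\delta$. This is precisely the paper's argument; the rest of your outline (the two-sided average bound and the final specialization) is correct but peripheral.
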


\begin{proof}
The proof is based on the metric space version of the Gehring Lemma;
see e.g.~\cite[Thm.~3.22]{BB} or~\cite[p.~11]{ST}.

Fix any $0<s<q$, e.g.\ $s=q/2$, and let $B_0=B(x,R)$
with $x\in E$ and $0<R<R_0$.
In addition, let $B$ be a ball such that $B\sub B_0$.
If $4B\cap E\neq \emptyset$, then 
we find a ball $B'$ centered at $E$ and
with a radius comparable to $\rad(B)$ such that $B\subset B'$,
and thus we have by~\eqref{eq:aikawa} and doubling that
\begin{equation*}\label{eq:gehr1*}
\begin{split}
 \int_{B} \dist(y,E)^{-q}\,d\mu 
 & \le \int_{B'} \dist(y,E)^{-q}\,d\mu \le C_0 \mu(B')\rad(B')^{-q} \\
 &\le C \mu(B) \bigl(\rad(B)^{-s}\bigr)^{q/s} 
  \le C \mu(B) \biggl(\vint_{2B} \dist(y,E)^{-s}\,d\mu\biggr)^{q/s}\,;
\end{split}
\end{equation*}
in the last inequality we used the fact that
$\dist(y,E)^{-s}\ge C \rad(B)^{-s}$
for all $y\in 2B$.
In particular we obtain the reverse H\"older inequality
\begin{equation}\label{eq:gehr1}
 \biggl(\vint_{B} \dist(y,E)^{-q}\,d\mu\biggr)^{s/q} 
 \le C \vint_{2B} \dist(y,E)^{-s}\,d\mu.
\end{equation}
On the other hand, if $4B\cap E = \emptyset$, then
$\dist(y,E)\simeq \dist(x,E)$ for all $y\in 2B$, and 
thus~\eqref{eq:gehr1} holds in this case as well.

Since the function $f(y)= \dist(y,E)^{-s}\in L^1(B_0)$ now
satisfies the assumption of the Gehring Lemma~\cite[Thm.~3.22]{BB}
for all balls $B\sub B_0$, the proof in~\cite{BB}
shows that there is $\delta>0$ such that
we have for the ball $B_1 =\frac 1 2 B_0$ that
\begin{equation*}\label{eq:gehr2}
\begin{split}
 \biggl(\vint_{B_1} \dist(y,E)^{-(q+\delta)}\,d\mu\biggr)^{s/(q+\delta)} 
 & \le C \vint_{2B_1} \dist(y,E)^{-s}\,d\mu \\ 
 & \le C \biggl(\vint_{2B_1} \dist(y,E)^{-q}\,d\mu\biggr)^{s/q} \le C \rad(B_1)^{s}, 
\end{split}
\end{equation*}
where we also used H\"older's inequality and the original estimate~\eqref{eq:aikawa}.
Moreover, here the constant $C>0$ is independent of the ball $B_1$. 
The claim follows, since for balls $B_1$ with $R_0/2\le \rad(B_1)<R_0$ we
can consider a cover using smaller balls.
\end{proof}

\section{A weighted Hardy inequality for $\beta<0$}\label{sect:negative beta}

As a first step towards Theorem~\ref{thm:main},
we establish in this section the result in the case $\beta<0$.
The particular form of the constant in the $(p,\beta)$-Hardy inequalities below
plays an important role in the proof of the general case of Theorem~\ref{thm:main}.
Notice that here the test functions are not required to vanish in $\Omega^c$,
and recall that we have the standing assumption that
$X$ is a complete doubling metric space supporting a $p$-Poincar\'e inequality.

\begin{proposition}\label{prop:hardy and assouad beta<0}
Assume that $X$ is unbounded and
let $1\le p<\infty$ and $\beta<0$.
If $\Omega\sub X$ is an open set with $\lcodima(\Omega^c)>p-\beta$,  
then $\Omega$ admits a $(p,\beta)$-Hardy inequality, and, in fact, the 
$(p,\beta)$-Hardy inequality holds for all $u\in\Lip_b(\Omega)$.

Moreover, there exists $\wtilde\beta<0$ 
such that for $\tilde\beta<\beta<0$ the constant in the $(p,\beta)$-Hardy inequality can be chosen to be
$C=|\beta|^{-1}C^*>0$, where $C^*>0$ is independent of $\beta$. 
\end{proposition}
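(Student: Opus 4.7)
My plan is to combine a pointwise chain estimate derived from the Poincar\'e inequality with the integral form of the lower Assouad codimension condition. By Remark~\ref{rmk:LT} and the self-improvement Lemma~\ref{lemma:aikawa si}, the assumption $\lcodima(\Omega^c) > p-\beta$ furnishes an exponent $q > p - \beta$ such that
\[
\int_{B(z,R)}\dom(y)^{-q}\,d\mu(y)\le C R^{-q}\mu(B(z,R))
\]
for every $z\in\Omega^c$ and $0 < R < \diam(\Omega^c)$. For $\beta$ in a small neighborhood of $0$ the exponent $q$ may be taken independent of $\beta$; moreover, the condition $q>0$ already forces $\mu(\Omega^c)=0$, which conveniently removes any issue when extending $u$ and its upper gradient across $\Omega^c$.

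First I would derive a pointwise estimate for $|u(x)|$ by telescoping along a chain of balls with geometrically growing radii of ratio $a>1$. Letting $\widetilde{u}$ denote a bounded Lipschitz extension of $u$ to $X$ (possible since $X$ is complete and $u\in\Lip_b(\Omega)$), the unboundedness of $X$ combined with doubling yields $\mu(B(x,R))\to\infty$, and since $\spt(u)$ is bounded it follows that $\widetilde{u}_{B(x,a^k\dom(x))}\to 0$ as $k\to\infty$. Applying the $p$-Poincar\'e inequality on each link of the chain gives
\[
|u(x)|\le C\sum_{k\ge 0} a^k \dom(x)\left(\vint_{\lambda B(x,a^k\dom(x))} g_u^p\,d\mu\right)^{1/p}.
\]

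Raising this to the $p$-th power via H\"older's inequality with a geometric weight $a^{-k\eta}$, multiplying by $\dom(x)^{\beta-p}$, and integrating over $\Omega$, I would swap the order of integration via Fubini. The resulting inner integrals have the form
\[
J_k(z)=\int_{\{x\,:\,d(x,z)<a^k\dom(x)\}}\frac{\dom(x)^\beta}{\mu(B(x,a^k\dom(x)))}\,d\mu(x),
\]
which I would estimate by decomposing into dyadic annuli around $z$ and applying the Aikawa condition at a nearest point of $\Omega^c$ to each annulus, producing a bound of the form $J_k(z)\le C a^{-kq}\dom(z)^\beta$. Summing the geometric series in $k$, which converges provided $\eta$ is sufficiently small relative to the fixed gap $q-p>0$, then produces the $(p,\beta)$-Hardy inequality.

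The main technical obstacle is to track the constants carefully enough to produce the advertised form $C=|\beta|^{-1}C^*$. The factor $|\beta|^{-1}$ does not appear directly from the chain and Aikawa estimates above, whose constants are uniform in $\beta$ close to $0$, but rather arises from a Cavalieri-type identity $\dom^\beta=|\beta|\int_{\dom}^\infty t^{\beta-1}\,dt$ applied in conjunction with an additional Fubini exchange that localizes the inequality to the sublevel sets $\{\dom<t\}$ and rearranges the result back against the weight $\dom^\beta$. This precise form of the constant is essential for the Wannebo-type substitution $v=u\,\dom^{(\beta'-\beta)/p}$ performed in Section~\ref{sect:main}, where absorption of a remainder term of order $|\beta|^{-1}(\beta'-\beta)^p$ forces $p>1$ and the range $\beta'<p-1$ when extending the inequality to nonnegative exponents.
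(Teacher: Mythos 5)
Your proposed route (pointwise chain estimate via Poincar\'e, then Fubini, then estimating the inner integral $J_k$ using the Aikawa/codimension condition) is structurally different from the paper's argument, which works on the scale of a maximal packing $\{\B_k\}_{k\in\Z}$ of $\Omega^c$, chains ball-averages $u_{4B^j}$ outward with a pigeonhole to select a single dominant scale $j(B)$, and integrates over the annuli $A_k=N_k\setminus N_{k-1}$. Your Fubini approach is in principle workable and arguably more streamlined, and the reduction to the Aikawa form of $\lcodima$ via Remark~\ref{rmk:LT} is legitimate. However, there is a genuine error in your treatment of the constant, and this is precisely the part of the statement that matters for Section~\ref{sect:main}.

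You assert that the chain and Aikawa-based estimates have constants that are uniform in $\beta$ near $0$, and that the factor $|\beta|^{-1}$ must therefore be manufactured separately via the identity $\dom^\beta=|\beta|\int_{\dom}^\infty t^{\beta-1}\,dt$. Both parts of this are incorrect. First, the bound $J_k(z)\le Ca^{-kq}\dom(z)^\beta$ cannot hold with $C$ uniform in $\beta$: for $x$ in the region $S_k(z)=\{x:d(x,z)<\lambda a^k\dom(x)\}$ one only has $\dom(x)\gtrsim a^{-k}\dom(z)$, so $\dom(x)$ ranges over an entire dyadic half-ray, and decomposing $J_k$ into dyadic levels $\dom(x)\simeq 2^m$ and applying the measure form of the $\lcodima$-condition inevitably produces the sum $\sum_m 2^{m\beta}\simeq(1-2^\beta)^{-1}\simeq|\beta|^{-1}$. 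This is exactly the same geometric-series mechanism by which the paper's proof picks up its factor $(1-2^\beta)^{-1}$ in~\eqref{eq:first sum} and~\eqref{eq:2nd sum final}. Second, the Cavalieri identity you quote would, if inserted in front of the left-hand side, introduce a factor $|\gamma|=|\beta-p|=p-\beta$ (bounded away from $0$), not $|\beta|^{-1}$; your sketch of an "additional Fubini exchange" localizing to $\{\dom<t\}$ does not, as written, produce the needed $|\beta|^{-1}$, and you would have to make this precise before it could substitute for the correct bookkeeping. Since the exact form $C=|\beta|^{-1}C^\ast$ is essential for the Wannebo-type absorption argument in Section~\ref{sect:main}, this gap is not cosmetic. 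A lesser remark: your reliance on a bounded Lipschitz extension of $u\in\Lip_b(\Omega)$ with bounded support is not automatic (a Lipschitz $u$ on $\Omega$ need not vanish continuously at $\partial\Omega$); the paper sidesteps this by working with ball averages over balls centered on $\Omega^c$, which are well-defined since the codimension assumption forces $\mu(\Omega^c)=0$. You could do the same, but as written the extension step needs justification.
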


\begin{proof}
Write $E=\Omega^c$.
For each $k\in\Z$, let $\B_k=\{B_{k,i}\}$ be a maximal packing of $E$ with balls
$B_{k,i}=B(x_{k,i},2^k)$, $x_{k,i}\in E$, and write 
$N_k=\bigcup_i 4B_{k,i}$ and $A_k=N_{k}\setminus N_{k-1}$.
We can then 
choose for each $B=B_{k,i}\in\B_k$ balls $B^j\in\B_j$, $j\geq k$, such that
$B=B^k$ and
$4B^j\sub 4B^{j+1}$ for all $j\geq k$. Indeed,
if $x^j$ is the center of $B^j$, there is $B^{j+1}=B(x^{j+1},2^{j+1})\in\B_{j+1}$
such that $x^j\in 2 B^{j+1}$, and hence
$4B^j\subset B(x^{j+1},2\cdot 2^{j+1}+4\cdot 2^{j})=4B^{j+1}$.
In particular, it follows that $B\subset 4B^j$ for all $j\ge k$.

Let $u\in\Lip_b(\Omega)$. 
Since $X$ is unbounded, we have for each $B_{k,i}$ that
$\vint_{4B_{k,i}^j}u \rightarrow 0$ as ${j\to\infty}$.
A standard telescoping trick using the $p$-Poincar\'e inequality
then yields for every $B=B_{k,i}\in\B_k$ that
\begin{equation}\label{eq: chain}
 |u_{4B}|\leq \sum_{j=k}^{\infty}|u_{4B^j}-u_{4B^{j+1}}| 
 \leq C \sum_{j=k}^{\infty} 2^{j} \biggl(\vint_{4\lambda B^j}g_u^p\,d\mu\biggr)^{1/p}.
\end{equation}
Comparison of the sum on the right-hand side of \eqref{eq: chain} with the convergent 
geometric series $\sum_{j=k}^\infty 2^{(k-j)\delta}$, for any $\delta>0$,
shows that there exists a constant $C_1(\delta)>0$, independent of $u$ and $B$, 
and an index $j(B)\geq k$
such that
\begin{equation}\label{eq: one big}
2^{j(B)} \biggl( \vint_{4\lambda B^{j(B)}}g_u^p\,d\mu\biggr)^{1/p} \geq 
   C_1 |u_{4B}|2^{(k-j(B))\delta}.  
\end{equation}
Now fix $q_1,q_2$ such that $\lcodima(\Omega^c)>q_1>q_2>p-\beta$, and
set $\delta=(q_2-p+\beta)/p>0$. We obtain from 
\eqref{eq: one big} for each $B\in\B_k$ a ball $B^{j(B)}$ 
of radius $2^{j(B)}$ satisfying 
\begin{equation}\label{eq:the one ball}
2^{k(q_2-p+\beta)}|u_{4B}|^p
\leq C_1 (2^{j(B)})^{q_2+\beta} \mu \big(B^{j(B)}\big)^{-1} \int_{4\lambda B^{j(B)}}g_u^p\,d\mu.
\end{equation}

Let us now start to estimate the left-hand side of the $(p,\beta)$-Hardy inequality.
Since $\dom(x)\ge 2^{k-1}$ for $x\in A_k$, we have
\begin{equation}\label{eq:compute}\begin{split}
 \int_\Omega & |u|^p  \dom^{\beta-p}\,d\mu  
   = \sum_{k=-\infty}^\infty \int_{A_k} |u|^p\dom^{\beta-p}\,d\mu\\
 &\leq 2^{p-\beta} \sum_{k=-\infty}^\infty 2^{k(\beta-p)} \int_{A_k} |u|^p\,d\mu 
  \leq 2^{p-\beta} \sum_{k=-\infty}^\infty 2^{k(\beta-p)} \sum_{B\in\B_k}\int_{4B} |u|^p\,d\mu\\
 &\leq C_2 \sum_{k=-\infty}^\infty 2^{k(\beta-p)} \sum_{B\in\B_k}\int_{4B} |u-u_{4B}|^p\,d\mu 
     + C_2 \sum_{k=-\infty}^\infty 2^{k(\beta-p)} \sum_{B\in\B_k}\int_{4B} |u_{4B}|^p\,d\mu,
\end{split}
\end{equation}
where $C_2=2^{p-\beta} C'$ and $C'=2^{p-1}>0$ is independent of $\beta$.
The first sum in the last line of~\eqref{eq:compute} can be estimated 
with the help of the $(p,p)$-Poincar\'e inequality 
(which is a well-known consequence of the $p$-Poincar\'e inequality, see e.g.~\cite{HaK,HEI})
and the controlled overlap of the balls $4\lambda B$ for $B\in\B_k$ (with a fixed $k\in\Z$, 
by doubling).
We also rewrite the integral, change the order of summation, 
and use the simple estimate $\dom(x)\le 4\cdot 2^{i}$ for $x\in A_i$, as follows:
\begin{equation}\label{eq:first sum}\begin{split}
 \sum_{k=-\infty}^\infty 2^{k(\beta-p)} & \sum_{B\in\B_k}\int_{4B} |u-u_{4B}|^p\,d\mu 
 \le C \sum_{k=-\infty}^\infty 2^{k\beta} \sum_{B\in\B_k} \int_{4\lambda B}g_u^p\,d\mu\\
 & \le C \sum_{k=-\infty}^\infty 2^{k\beta} \int_{N_{k+M}}g_u^p\,d\mu
  = C 2^{-M\beta} \sum_{k=-\infty}^\infty 2^{k\beta} \int_{N_{k}}g_u^p\,d\mu \\
 &  = C 2^{-M\beta} \sum_{k=-\infty}^\infty 2^{k\beta} \sum_{i=-\infty}^{k}\int_{A_i}g_u^p\,d\mu   
   = C 2^{-M\beta} \sum_{i=-\infty}^\infty \int_{A_i} g_u^p\,d\mu \sum_{k=i}^\infty 2^{k\beta} \\
 &  = C 2^{-M\beta} \sum_{i=-\infty}^\infty \frac {2^{i\beta}}{1-2^\beta} \int_{A_i} g_u^p\,d\mu   
  \le  C \frac {2^{-M\beta}4^{-\beta}}{1-2^\beta} \sum_{i=-\infty}^\infty \int_{A_i} g_u^p \dom^\beta\,d\mu \\ &   \le C \frac {2^{-(M+2)\beta}}{1-2^\beta} \int_{\Omega} g_u^p \dom^\beta\,d\mu.
\end{split}
\end{equation}
Above the constants $C>0$ and $M=M(\lambda)\ge 4$ are independent of $\beta$;
note also how the assumption $\beta<0$ was needed.

In the last sum of~\eqref{eq:compute} we first use~\eqref{eq:the one ball} and 
then change the order of summation:
\begin{equation}\label{eq:2nd sum}\begin{split}
 \sum_{k=-\infty}^\infty & 2^{k(\beta-p)} \sum_{B\in\B_k}\int_{4B} |u_{4B}|^p\,d\mu 
  = \sum_{k=-\infty}^\infty 2^{k(\beta-p)} \sum_{B\in\B_k}\mu(4B) |u_{4B}|^p\\
 & \leq  C \sum_{k=-\infty}^\infty 2^{-q_2k} \sum_{B\in\B_k} \mu(4B) 
   (2^{j(B)})^{q_2+\beta} \mu\big(B^{j(B)}\big)^{-1} \int_{4\lambda B^{j(B)}}g_u^p\,d\mu\\ 
 & = C \sum_{j=-\infty}^\infty \sum_{\tilde B\in\B_j} 2^{j\beta} \int_{4\lambda \tilde B}g_u^p\,d\mu  \sum_{k\leq j}\sum_{\{B\in\B_k:\tilde B=B^{j(B)}\}} 2^{-q_2k} 2^{q_2 j} \mu(4B) \mu(\tilde B)^{-1}\\
 & \leq  C \sum_{j=-\infty}^\infty \sum_{\tilde B\in\B_j} 2^{j\beta} \int_{4\lambda \tilde B}g_u^p\,d\mu  \sum_{k\leq j}\sum_{\{B\in\B_k:B\sub 4\tilde B\}}\frac{\mu(B)2^{-q_2k}}{\mu(\tilde B)2^{-q_2j}}.
 \end{split}
\end{equation}
Since the balls $B$, for $B\in\B_k$, are pairwise disjoint,
the assumption $\lcodima(E)>q_1>q_2$ implies
(recall here that $E_{2^k}=\{x\in X : \dist(x,E) < 2^k\}$)
\begin{equation*}\label{eq:assouad helps*}
\begin{split}
 \sum_{k\leq j}\sum_{\{B\in\B_k:B\sub 4\tilde B\}}\frac{\mu(B)2^{-q_2k}}{\mu(\tilde B)2^{-q_2j}}
 &\leq C \sum_{k\leq j} \frac{\mu(E_{2^{k}}\cap 4\tilde B)}{\mu(4\tilde B)}
   \left(\frac{2^{k}}{2^{j}}\right)^{-q_1}\left(\frac{2^{k}}{2^{j}}\right)^{q_1-q_2}\\
 &\leq C \sum_{k\leq j} \left(\frac{2^{k}}{2^{j}}\right)^{q_1-q_2}\leq C(q_1,q_2).
\end{split}
\end{equation*} 
Thus we obtain from~\eqref{eq:2nd sum}, using also the bounded overlap of $4\lambda\tilde B$, 
that
\begin{equation}\label{eq:2nd sum final} 
\begin{split}
\sum_{k=-\infty}^\infty & 2^{k(\beta-p)} \sum_{B\in\B_k}\int_{4B} |u_{4B}|^p\,d\mu 
\le C \sum_{j=-\infty}^\infty \sum_{\tilde B\in\B_j} 2^{j\beta} \int_{4\lambda \tilde B}g_u^p\,d\mu\\
 &  \leq C \sum_{j=-\infty}^\infty 2^{j\beta} \int_{N_{j+M}}g_u^p\,d\mu
  \le C  \frac {2^{-(M+2)\beta}}{1-2^\beta} \int_{\Omega} g_u^p \dom^\beta\,d\mu,   
\end{split}
\end{equation}
where the last inequality follows just like in~\eqref{eq:first sum}.
A combination of \eqref{eq:compute}, \eqref{eq:first sum}, 
and~\eqref{eq:2nd sum final} thus yields the $(p,\beta)$-Hardy inequality
\begin{equation}\label{eq:look the constant}
 \int_\Omega |u(x)|^p  \dom(x)^{\beta-p}\,d\mu  
 \le C \frac {2^{-(M+3)\beta}}{1-2^\beta} \int_{\Omega} g_u(x)^p \dom(x)^\beta\,d\mu,
\end{equation}
where the constant $C>0$ is independent of $\beta$,
but depends on $\delta$ (cf.~\eqref{eq: chain}), $q_1$, $q_2$, $p$, and the data associated to~$X$.

We conclude the proof with a closer examination of the constant in~\eqref{eq:look the constant}. 
First of all, if $-1<\beta<0$, then $2^{-(M+3)\beta}\le 2^{M+3}$, 
and when $\beta$ is close enough to $0$, then
$1-2^{\beta}\simeq -\beta$. In addition, the constant $C$ in~\eqref{eq:look the constant} 
depends on $\delta$, $q_1$ and $q_2$,
and hence indirectly on $\beta$ as well, since 
$\delta=(q_2-p+\beta)/p$ and
$p-\beta<q_1<q_2<\lcodima(\Omega^c)$. Nevertheless, if e.g.\ 
$\big(p-\lcodima(\Omega^c)\big)/2<\beta<0$, then this constant can 
obviously be chosen to depend only on
$p$ and $\lcodima(\Omega^c)$ (and the data associated to~$X$). 
It follows that there exists $\wtilde\beta<0$,
depending on $p$ and $\lcodima(\Omega^c)$,
such that for 
all $\wtilde\beta<\beta<0$ we have
\[
 \int_\Omega |u(x)|^p  \dom(x)^{\beta-p}\,d\mu  
  \le \frac {C^*}{|\beta|} \int_{\Omega} g_u(x)^p \dom(x)^\beta\,d\mu,
\]
where the constant $C^*>0$ is independent of the particular $\beta$.   
\end{proof}

\section{The case $0\leq\beta<p-1$ of Theorem~\ref{thm:main}}\label{sect:main}

We now turn to the proof of the weighted $(p,\beta)$-Hardy inequality 
in the case $0\le\beta <p-1$
under the assumption $\lcodima(\Omega^c)>p-\beta$. 
The proof is based on Proposition~\ref{prop:hardy and assouad beta<0}, and it combines ideas 
from~\cite{W,KZ,lesi}. In fact, in the Euclidean case the result can be readily 
deduced from the $(p,\beta)$-Hardy inequalities of Proposition~\ref{prop:hardy and assouad beta<0}
with a careful use of~\cite[Lemma 2.1]{lesi}.

\begin{proof}[Proof of Theorem \ref{thm:main}]
For $\beta < 0$, the claim follows from Proposition~\ref{prop:hardy and assouad beta<0}. 
and thus we are left with the case $0\le\beta<p-1$. 
Since $\lcodima(\Omega^c)> p-\beta > 1$, we have by Proposition~\ref{prop:hardy and assouad beta<0} 
that $\Omega$ admits an $(p-\beta,-\beta_0)$-Hardy inequality whenever 
$0<\beta_0<\lcodima(\Omega^c) - p + \beta$; here we need to know that
$X$ supports a $(p-\beta)$-Poincar\'e inequality. Moreover, there exists $\wtilde\beta_0>0$
such that for $0<\beta_0<\wtilde\beta_0$ the constant in the $(p-\beta,-\beta_0)$-Hardy inequality
is $C^*\beta_0^{-1}$, with $C^*>0$ independent of $\beta_0$.
Fix such $\beta_0$ to be chosen later.

Let $u\in \Lip_0(\Omega)$ with an upper gradient $g_u$, 
and define 
\[
v(x)=|u(x)|^{{p}/{(p-\beta)}}\dom(x)^{\beta_0/(p-\beta)}.
\] 
Then $v$ is a Lipschitz-function with a compact support in $\Omega$, 
$|u(x)|^{p} = |v(x)|^{p-\beta}\dom(x)^{-\beta_0}$, and, moreover,
the function
\begin{equation}\label{eq:ugv}
g_v(x) := \tfrac{p}{p-\beta}|u(x)|^{\beta/(p-\beta)}g_u(x) \dom(x)^{\beta_0/(p-\beta)}
        + \tfrac{\beta_0}{p-\beta} |u(x)|^{{p}/{(p-\beta)}} \dom(x)^{(\beta_0-p+\beta)/(p-\beta)}
\end{equation}
is an upper gradient of $v$ (cf.\ e.g.\ \cite[Thm.~2.15 and 2.16]{BB}); 
here it is essential that the support of $u$ is a compact set inside $\Omega$. 
Using the $(p-\beta,-\beta_0)$-Hardy inequality of Proposition~\ref{prop:hardy and assouad beta<0} for
$v$, we obtain
\begin{equation}\label{eq: hardy for v}\begin{split}
 \int_\Omega  |u|^{p}  \dom^{\beta-p}\,d\mu & 
   =  \int_\Omega |v|^{p-\beta} \dom^{-\beta_0-(p-\beta)}\,d\mu\\
  &  \le C^*\beta_0^{-1} \int_\Omega g_v^{p-\beta}\dom^{-\beta_0}\,d\mu.
\end{split}
\end{equation}
By~\eqref{eq:ugv} and H\"older's inequality (for exponents $\frac p\beta$ and $\frac{p}{p-\beta}$),
we estimate the above integral for $g_v$ as
\begin{equation}\label{eq: holder for v}\begin{split}
\int_\Omega g_v^{p-\beta}\dom&^{-\beta_0}\,d\mu 
    \le 2^{p-\beta} \Big(\tfrac{p}{p-\beta}\Big)^{p-\beta}
  \int_\Omega |u|^\beta g_u^{p-\beta}\dom^{\beta_0 - \beta_0}\,d\mu \\
  & \qquad\qquad + 2^{p-\beta} \Big(\tfrac{\beta_0}{p-\beta}\Big)^{p-\beta}
     \int_\Omega |u|^{p}{\dom^{\beta_0 - p + \beta-\beta_0}}\,d\mu\\
  &  \le C(p,\beta) \int_\Omega \Big(|u|^\beta\dom^{\frac{\beta(\beta-p)}{p}}\Big)
       \Big(g_u^{p-\beta}\dom^{\frac{\beta(p-\beta)}{p}}  \Big)\,d\mu\\
  & \qquad\qquad + C(p,\beta) {\beta_0}^{p-\beta}\int_\Omega |u|^{p}{\dom^{\beta - p}}\,d\mu\\  
  & \le C(p,\beta)  \bigg(\int_\Omega |u|^p\dom^{\beta-p}\,d\mu \bigg)^\frac{\beta}{p}
     \bigg(\int_\Omega g_u^p\dom^{\beta}\,d\mu\bigg)^{\frac{p-\beta}{p}}\\
  & \qquad\qquad + C(p,\beta) {\beta_0}^{p-\beta}\int_\Omega |u|^{p}{\dom^{\beta - p}}\,d\mu,  
\end{split}
\end{equation}
where the constant $C(p,\beta) = 2^{p-\beta} \big({p}/{(p-\beta)}\big)^{p-\beta}$ is independent of $\beta_0$.

We now choose $0<\beta_0<\wtilde\beta_0$ to be so small that 
\[
 C^* \beta_0^{-1} C(p,\beta) {\beta_0}^{p-\beta}
 = C^* C(p,\beta) {\beta_0}^{p-\beta-1} < \tfrac 1 2.
\]
This is possible since $p-\beta>1$ and the factor $C^* C(p,\beta)$ does not depend on $\beta_0$.
After the insertion of~\eqref{eq: holder for v} into~\eqref{eq: hardy for v}, 
we observe that under the above choice of $\beta_0$,  
the second term emerging on the right-hand side is less than half of the left-hand side, 
and thus we obtain
\begin{equation}\label{eq: hardy for u}
 \int_\Omega  |u|^{p}  \dom^{\beta-p}\,d\mu  
 \le C \bigg(\int_\Omega |u|^p\dom^{\beta-p}\,d\mu \bigg)^\frac{\beta}{p}
     \bigg(\int_\Omega g_u^p\dom^{\beta}\,d\mu\bigg)^{\frac{p-\beta}{p}}.
\end{equation}
The $(p,\beta)$-Hardy inequality for $u$ now follows from~\eqref{eq: hardy for u}
by dividing with the first factor on the right-hand side (which we may assume to be
non-zero), and then taking both sides to power $p/(p-\beta)$. 
\end{proof}

Notice that the requirement $p-\beta>1$ is essential in the above proof.
This is not merely a technical assumption, since Theorem~\ref{thm:main} need
not hold if $p-\beta \le 1$; see Section~\ref{sect:sharp examples}.

\begin{remark}\label{rmk:direct}
 It would be interesting to know if there is a more direct proof for the case $\beta\ge 0$,
 i.e.\ one avoiding the use of the case $\beta<0$. This is strongly related to the question
 what Poincar\'e inequalities are actually needed in Theorem~\ref{thm:main}. The same question applies
 to Theorem~\ref{thm:main fat} as well in the case $\beta>0$ 
 (cf.\ the proof at the end of Section~\ref{sect:fat}).
 
 Here it is good to recall that when $\Omega^c$ (or actually $\bdry\Omega$)
 satisfies additional accessibility conditions from
 within $\Omega$, then such direct proofs exist. 
 Moreover, under these accessibility conditions the results of
 Theorems~\ref{thm:main} and~\ref{thm:main fat} can be extended to the case
 $\beta\ge p-1$, see e.g.~\cite[Thm.~1.4]{kole}, \cite[Thm.~4.3]{LMM}, 
 and~\cite[Thm.~4.5]{LPAMS2}. 
\end{remark}

\begin{remark}\label{rmk:one point}
An interesting special case of Theorem~\ref{thm:main} is that where 
$X$ is unbounded and
the distance function $\dom(x)$ is replaced by 
the distance to a fixed point $x_0\in X$, i.e., 
we have the inequality
\begin{equation}\label{eq:hardy-point}
\int_{X} |u(x)|^p\, d(x,x_0)^{\beta-p}\,d\mu
   \leq C\int_{X} g_u(x)^p\, d(x,x_0)^{\beta}\,d\mu.
\end{equation}
It follows from Theorem~\ref{thm:main} that this inequality holds
for all $u\in\Lip_0(X\setminus\{x_0\})$ when
$\lcodima(\{x_0\})>p-\beta>1$, and in fact, by 
Corollary~\ref{coro:nonzero bdry} below, $u$ need not vanish at $x_0$,
so the inequality actually holds for all $u\in\Lip_b(X)$.
On the other hand, Theorem~\ref{thm:main fat} implies that
for $\ucodima(\{x_0\})<p-\beta$ inequality~\eqref{eq:hardy-point}
is valid for all $u\in\Lip_0(X\setminus\{x_0\})$.

Let us mention here that 
the lower and upper Assouad codimensions of a point
are closely related to the \emph{exponent sets} of the point $x_0$, 
defined in~\cite{BBL}.
Namely,
$\lcodima(\{x_0\})=\sup \lQ(x_0)$ and $\ucodima(\{x_0\})=\inf\uQ(x_0)$
(see~\cite{BBL} for the definitions of the $Q$-sets).

In the Heisenberg group $\mathbb H_n$, 
which is one particular example of a metric space
satisfying our general assumptions,
an inequality of the type~\eqref{eq:hardy-point} 
was recently obtained by Yang~\cite{yang} using a completely different approach. 
Since $\lcodima(\{0\})=Q:=2n+2$ for $0\in\mathbb H_n$,
inequality~(3.6) in~\cite{yang}
corresponds exactly to inequality~\eqref{eq:hardy-point},
for $x_0=0$, under the condition $1<p<\lcodima(\{x_0\})$\,;
notice that Theorem~1.1 in~\cite{yang} only records the unweighted case $\beta=0$.
However, the requirement $p-\beta>1$ is not needed in~\cite{yang}, 
and the inequality is established even with the sharp constant $\big(p/(Q-p+\beta)\big)^p$.
With our techniques there is no hope of obtaining any sharpness for the constants.

Recall also that
in Euclidean spaces the corresponding well-known inequality, i.e.\ 
\begin{equation*}\label{eq:hardy-point rn}
\int_{\R^n} |u(x)|^p\, |x|^{\beta-p}\,d\mu
   \leq C\int_{\R^n} |\nabla u(x)|^p\, |x|^{\beta}\,d\mu,
\end{equation*}
with the optimal constant $C=\big(p/|n-p+\beta|\big)^p$,
follows easily by using the classical $1$-dimensional weighted
Hardy inequalities (cf.~\cite{HLP}) on rays starting from the origin.
For $p-\beta<n$ this inequality holds for all $u\in\Lip_b(\R^n)$, and
for $p-\beta>n$ for all $u\in\Lip_0(\R^n\setminus\{0\})$.
See also~\cite{SSW} for related inequalities where the distance is taken
to a $k$-dimensional subspace of $\R^n$, $1\le k < n$.
\end{remark}

\section{Upper Assouad codimension and thickness}\label{sect:fat}

In this section we establish a connection between the upper Assouad codimension
and Hausdorff content density conditions, which might also be of independent
interest, and as a consequence obtain a proof for Theorem~\ref{thm:main fat}. 
The following lemma is a modification of \cite[Lemma~4.1]{lene},
where a corresponding statement was given in terms of Minkowski contents in
Euclidean spaces.

\begin{lemma}\label{lemma:a to h}
Let $E\sub X$ be a closed set and assume that 
$\ucodima(E)<q$.
Then 
there exists a constant
$C>0$ such that
\begin{equation}\label{eq: hausd}
\Ha^{\mu,q}_{R}\big(E \cap B(w,R)\big)\ge  C\,R^{-q}\mu(B(w,R))
\end{equation}
for every $w\in E$ and all $0<R<\diam(E)$.
\end{lemma}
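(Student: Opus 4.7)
The plan is to deduce \eqref{eq: hausd} via a Frostman-type measure distribution. More precisely, I plan to construct a Borel measure $\nu$ on $X$, supported on a small neighborhood of $E\cap B(w,R)$, satisfying
\begin{enumerate}
\item[(i)] $\nu(X)\ge c_1\,R^{-q}\mu(B(w,R))$, and
\item[(ii)] $\nu(B(x,r))\le c_2\,r^{-q}\mu(B(x,r))$ for every $x\in X$ and every $r>0$.
\end{enumerate}
Given such $\nu$, the bound \eqref{eq: hausd} follows in one line: if $\{B_k\}$ is any cover of $E\cap B(w,R)$ by balls of radius at most $R$, then the slightly enlarged balls $2B_k$ cover $\spt\nu$ (using closedness of $E$ together with the doubling property), and
\[
c_1 R^{-q}\mu(B(w,R))\le \nu(X)\le \sum_k \nu(2B_k)\le c_2\sum_k (2\rad(B_k))^{-q}\mu(2B_k)\le C\sum_k\rad(B_k)^{-q}\mu(B_k).
\]

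To construct $\nu$, I will fix an exponent $s$ with $\ucodima(E)<s<q$; by the definition of the upper Assouad codimension there is $c_0>0$ such that
\[
\mu(E_r\cap B(z,R'))\ge c_0\,(r/R')^s\,\mu(B(z,R'))
\]
for every $z\in E$ and $0<r<R'<\diam(E)$. This fatness condition will drive an iterative dyadic construction at scales $\rho_j=2^{-j}R$. At level $j$ the plan is to pick a maximal $\rho_j$-packing $\mathcal{P}_j=\{B(z_i^{(j)},\rho_j)\}_i$ of $E\cap B(w,R)$ and to define a discrete measure $\nu_j$ by placing at each $z_i^{(j)}$ a mass proportional to $\mu(B(z_i^{(j)},\rho_j))$, normalized so that $\nu_j(X)\simeq R^{-q}\mu(B(w,R))$. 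The fatness inequality, applied inside each parent $\rho_{j-1}$-ball, guarantees that there are enough children in $\mathcal{P}_j$ to absorb the parent's mass without blowing up the local density. A weak$^*$ limit of the $\nu_j$'s will then provide a measure $\nu$ supported on $\ol{E\cap B(w,R)}$ with the prescribed total mass.

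The technical core, and the main obstacle, is to verify property (ii) \emph{uniformly} for all $x\in X$ and all $r>0$, not just at dyadic scales and packing centers. For a general ball $B(x,r)$, I will choose $j$ with $\rho_{j+1}\le r<\rho_j$ and split the estimate of $\nu(B(x,r))$ into two pieces: the contributions from packing points at scales coarser than $r$, which are controlled by the bounded-overlap property of a maximal packing together with doubling; and the contributions from scales finer than $r$, where the geometric decay factor $2^{-(q-s)(i-j)}$ arising from the strict inequality $s<q$ yields a summable geometric series. Balancing these two estimates against $r^{-q}\mu(B(x,r))$ is precisely where the hypothesis $\ucodima(E)<q$ is essential, and it is also where the comparison between $\mu$ of packing balls and $\mu$ of arbitrary balls through the doubling property actually enters.
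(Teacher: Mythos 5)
Your overall strategy is exactly the one the paper uses: build a Frostman-type measure $\nu$ on (a subset of) $E\cap B(w,R)$ with total mass comparable to $R^{-q}\mu(B(w,R))$ and local density $\nu(B(x,r))\le C r^{-q}\mu(B(x,r))$, and then conclude by the mass distribution principle. The gap is in the only step that is actually delicate, namely your property (ii), and it is present in both readings of your construction. If the level-$j$ measure is ``flat'' (mass proportional to $\mu(B(z_i^{(j)},\rho_j))$, renormalized at each level so that $\nu_j(X)\simeq R^{-q}\mu(B_0)$), then nothing prevents the limit measure from concentrating on the locally fattest part of $E$: the hypothesis $\ucodima(E)<q$ gives only a \emph{lower} bound on $\mu(E_\rho\cap B(x,r))$, never an upper bound, so for a set consisting of, say, a thin piece together with a comparatively fat piece occupying a small sub-ball $B(x_0,r_0)$, the normalizing constants $\Sigma_j=\sum_i\mu(B(z_i^{(j)},\rho_j))$ are eventually dominated by the fat piece and essentially all of the mass $R^{-q}\mu(B_0)$ lands in $B(x_0,r_0)$, violating (ii) at scale $r_0$. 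If instead you mean the hierarchical redistribution suggested by your phrase ``absorb the parent's mass'', then the fixed dyadic ratio $\rho_j/\rho_{j-1}=1/2$ does not close the induction: the codimension condition only guarantees that the children of a parent $P$ carry total $\mu$-mass at least $c_0 2^{-s}\mu(P)$, so each generation can inflate the density bound by the factor $(c_0 2^{-s})^{-1}=c_0^{-1}2^{s}$, and after $j$ generations one gets $\nu(B)\le\mu(B)\,c_0^{-j}2^{js}R^{-q}\mu(B_0)^{-1}$, which is $\le C\mu(B)\rho_j^{-q}R^{-q}\cdot R^{q}\mu(B_0)^{-1}$ only if $c_0^{-1}2^{s}\le 2^{q}$, i.e.\ only if $c_0\ge 2^{-(q-s)}$ --- a bound that the definition of $\ucodima$ does not provide.

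The paper's proof resolves exactly this point: it fixes $\ucodima(E)<q'<q$ and chooses the ratio between consecutive generations to be a small $\delta$ with $\delta^{q-q'}<c_0$, so that the per-generation mass bound $M_P\ge c_0\delta^{q'}\mu(P)>\delta^{q}\mu(P)$ makes the inductive density estimate $\nu(B_{i_1\dots i_k})<\mu(B_{i_1\dots i_k})\,\delta^{-kq}\mu(B_0)^{-1}$ self-propagating. Note also that in the hierarchical construction the density of $\nu(B(x,r))$ is read off from the \emph{single} generation $k$ with $r_k\le r<r_{k-1}$ (finer generations are already accounted for, since each ball's mass is split among its descendants), so there is no summation over finer scales and no need for the geometric series $2^{-(q-s)(i-j)}$ you invoke; the bounded overlap of a packing handles the finitely many generation-$k$ balls meeting $B(x,r)$. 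Two further small points: it suffices to verify (ii) for $x$ in the support of $\nu$ and $0<r<R$ (any efficient cover may be assumed to consist of balls meeting the support), and the packings should be taken inside $\tfrac12 B_0$ (resp.\ inside $\tfrac12$ of each parent) so that the neighborhood $E_{r_k}\cap\tfrac14 B_0$ is genuinely covered by the dilated packing balls when the codimension condition is applied.
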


\begin{proof}
Let $\ucodima(E)<q'<q$ and fix $0<\delta<1/2$ to be chosen a bit later.
Let also $w\in E$ and $0<R<\diam(E)$, and denote $B_0=B(w,R)$ and $r_k=\delta^k R$.

We begin with a maximal packing $\{B_{i_1}\}_{i_1}$ of $\frac 12 B_0\cap E$
with balls $B_{i_1}=B(w_{i_1},r_1)$, $i_1\in I_0\sub\N$, 
where $w_{i_1}\in \frac 12 B_0\cap E$.
Then we have for the $r_1$-neighborhood of $E$ that 
$E_{r_1}\cap \frac 1 4 B_0 \sub \bigcup_{i_1} 3B_{i_1}$, and
thus doubling and the fact $q'>\ucodima(E)$ imply
\[
\mu(B_0)\Bigl(\frac{r_1}{R}\Bigr)^{q'} 
\le C \mu\bigl(E_{r_1}\cap \tfrac 1 4 B_0\bigr)
\le C \sum_{i_1}\mu(3B_{i_1})\le C \sum_{i_1}\mu(B_{i_1}).
\]
In particular, there exists a constant $c_0>0$, independent of $w$ and $R$, such that
\begin{equation*}
 \sum_{i_1}\mu(B_{i_1}) \geq c_0 \delta^{q'}\mu(B_0).
\end{equation*}
We now choose $0<\delta<1$ to be so small that $\delta^{q-q'}<c_0$,
whence $c_0 \delta^{q'}>\delta^{q}$, and so
\[
M_0:=\sum_{i_1}\mu(B_{i_1}) > \delta^{q}\mu(B_0).
\]
We complete the first step of the construction by defining a measure distribution for the
balls $B_{i_1}$ by 
\begin{equation}\label{eq:nu one}
\nu(B_{i_1})=\mu(B_{i_1})/M_0< \mu(B_{i_1}) \delta^{-q}\mu(B_0)^{-1}.
\end{equation}

In the next step, we create a similar measure distribution inside the
balls $B_{i_1}$.
As above, we find for each $i_1\in I_0$ 
pairwise disjoint balls $B_{i_1 i_2}=B(w_{i_1 i_2},r_2)$, $i_2\in I_{i_1}\sub\N$,
where $w_{i_1 i_2}\in \frac 1 2 B_{i_1}\cap E$ and
\begin{equation}\label{eq:N one}
 M_{i_1}:=\sum_{i_2}\mu(B_{i_1 i_2}) \geq c_0 \delta^{q'}\mu(B_{i_1})>\delta^{q}\mu(B_{i_1}).
\end{equation}
We define 
\[
\nu(B_{i_1 i_2}):=\nu(B_{i_1}) \mu(B_{i_1 i_2})/M_{i_1}  < \mu(B_{i_1 i_2})\delta^{-2q}\mu(B_{0})^{-1},
\]
where the inequality follows from~\eqref{eq:nu one} and~\eqref{eq:N one}. 
Notice that since ${B_{i_1}\cap B_{j_1}=\emp}$ whenever $i_1\neq j_1$\,,
and clearly $B_{i_1 i_2}\sub B_{i_1}$ for every 
$i_2\in I_{i_1}$, we have that all the balls
$B_{i_1 i_2}$ are pairwise disjoint.

Continuing the construction in the same way, 
we find in the $k$:th step 
a collection of pairwise disjoint closed balls 
$B_{i_1 i_2 \dots i_{k-1} i_{k}}\sub B_{i_1 i_2 \dots i_{k-1}}$, 
$i_{k}\in I_{i_1 i_2 \dots i_{k-1}}\sub\N$, 
with center points
$w_{i_1 i_2 \dots i_{k}}\in E\cap \frac 1 2 B_{i_1 i_2 \dots i_{k-1}}$
and all of radius $r_k= \delta^{k} R$,
such that
\[
E_{r_k}\cap \tfrac 1 4 B_{i_1 i_2 \dots i_{k-1}}\sub \bigcup_{i_k} 3 B_{i_1 i_2 \dots i_{k-1} i_{k}}.
\]
Thus $q'>\ucodima(E)$ and the choice of $\delta$ imply
\begin{equation}\label{eq:N kei}
M_{i_1 i_2 \dots i_{k-1}} :=  
\sum_{i_k}\mu(B_{i_1 i_2 \dots i_{k-1} i_{k}}) \ge c_0 \delta^{q'}\mu(B_{i_1 i_2 \dots i_{k-1}})
> \delta^{q}\mu(B_{i_1 i_2 \dots i_{k-1}}).
\end{equation}
We now distribute the measure for the balls $B_{i_1 i_2 \dots i_{k-1} i_{k}}$
as follows:
\begin{equation}\label{eq:nupallo}
\begin{split}
\nu(B_{i_1 i_2 \dots i_{k-1} i_{k}}) := &\  
\nu(B_{i_1 i_2 \dots i_{k-1}})\mu(B_{i_1 i_2 \dots i_{k-1} i_k})/M_{i_1 i_2 \dots i_{k-1}} \\
< &\  \mu(B_{i_1 i_2 \dots i_{k-1} i_k}) \delta^{-kq}\mu(B_{0})^{-1},
\end{split}
\end{equation}
where we used~\eqref{eq:N kei} and the recursive assumption that 
\[
\nu(B_{i_1 i_2 \dots i_{k-1}}) < \mu(B_{i_1 i_2 \dots i_{k-1}}) \delta^{-(k-1)q}\mu(B_{0})^{-1}.
\] 
This concludes the
general step of the construction.

Next, we define
\[
\wtilde E=\bigcap_{k=1}^\infty \bigcup_{i_1,\dots,i_k} B_{i_1 i_2 \dots i_{k}},
\]
so that $\wtilde E\sub E\cap B_0$ is a non-empty compact set
(here we need the assumptions that $X$ is complete and $E$ is closed;
recall also that balls are assumed to be closed). 
Using the Carath\'eodory construction (cf.\ e.g.\ \cite[pp.\ 54--55]{mat})
for the set function $\nu$, we obtain a Borel regular measure $\tilde\nu$ 
which is supported on $\wtilde E$ and satisfies
$\tilde\nu(B_{i_1 i_2 \dots i_{k}})=\nu(B_{i_1 i_2 \dots i_{k}})$
for all of the balls in the construction (see also \cite[pp.\ 13-14]{F}).

If $x\in \wtilde E$ and $0<r<R$, we choose
$k\in\N$ such that $R \delta^{k}= r_k \leq r < R \delta^{k-1}$.
Then there exists a constant $C_1>0$ 
(depending on the doubling constant and $\delta$) 
such that
$B(x,r)$ intersects at most $C_1$ of the balls
$B_{i_1 i_2 \dots i_{k}}$ from the $k$:th step of the construction;
let these be $B'_1,\dots,B'_N$. These balls are pairwise disjoint,
contained in $B(x,3r)$, and they
cover $\wtilde E\cap B(x,r)$, and thus we have by~\eqref{eq:nupallo},
the choice of $k$,
and doubling that
\begin{equation}\label{eq: nu of ball}
\begin{split}
\tilde\nu(B(x,r)) & = \sum_{j=1}^N\tilde \nu(B'_j) = \sum_{j=1}^N\nu(B'_j) 
\le \sum_{j=1}^N \mu(B'_j) \delta^{-kq}\mu(B_{0})^{-1} \\
& \le C \mu(B(x,r))(r/R)^{-q}\mu(B_{0})^{-1}.
\end{split}
\end{equation}

Finally, let $\{B(z_i,r_i)\}_i$ be a cover of $E\cap B_0$ with
balls of radii $0<r_i<R$.
Using~\eqref{eq: nu of ball}, 
we conclude that
\[
1 = \tilde \nu\big(E\cap B_0\big)\leq \sum_{i} \tilde \nu(B(z_i,r_i))
\leq C\sum_{i}\frac {\mu(B(z_i,r_i))r_i^{-q}}{\mu(B_{0}) R^{-q}},
\]
and so taking the infimum over all such covers yields
\[
\Ha^{\mu,q}_{R}\big(E \cap B_0\big)\ge  C\,R^{-q}\mu(B_0),
\]
as desired.
\end{proof}

Consequently, we obtain a characterization for the upper Assouad codimension
(of closed sets) in terms of Hausdorff content density:

\begin{coro}\label{coro:h char of ucodima}
 Let $E\sub X$ be a closed set. Then $\ucodima(E)$ is the infimum of all $q\ge 0$
 for which there exists $C\ge 0$ such that~\eqref{eq: hausd} holds for every $w\in E$ 
 and all $0<R<\diam(E)$.
\end{coro}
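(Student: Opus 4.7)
The plan is to establish the corollary by proving both inequalities between $\ucodima(E)$ and the infimum $q^\star$ on the right-hand side of the statement. The nontrivial direction is already packaged in Lemma~\ref{lemma:a to h}; the remaining direction is an elementary packing argument that uses doubling.

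First, I would argue that $\ucodima(E)\ge q^\star$. This is immediate from Lemma~\ref{lemma:a to h}: for every $q>\ucodima(E)$ the lemma gives the content-density bound~\eqref{eq: hausd} with some constant, and hence such $q$ are admissible in the definition of $q^\star$. Taking the infimum over these $q$ yields $q^\star\le \ucodima(E)$.

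For the reverse inequality $\ucodima(E)\le q^\star$, fix $q>q^\star$, so that~\eqref{eq: hausd} holds, and let $w\in E$ and $0<r<R<\diam(E)$. I would choose a maximal $(r/2)$-packing $\{B(x_i,r/2)\}$ of $E\cap B(w,R)$ with $x_i\in E\cap B(w,R)$; then the enlarged balls $B(x_i,r)$ form a cover of $E\cap B(w,R)$ by balls of radius $r\le R$, so admissible in the definition of $\Ha^{\mu,q}_{R}$. Combining this with~\eqref{eq: hausd} and the doubling of $\mu$ leads to
\[
C\,R^{-q}\mu(B(w,R))\le \Ha^{\mu,q}_{R}\bigl(E\cap B(w,R)\bigr)\le r^{-q}\sum_{i}\mu(B(x_i,r))\le C'\,r^{-q}\sum_{i}\mu(B(x_i,r/2)).
\]
Since the balls $B(x_i,r/2)$ are pairwise disjoint and all contained in $E_r\cap B(w,2R)$, the sum is bounded by $\mu(E_r\cap B(w,2R))$. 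Using doubling once more to compare $\mu(B(w,2R))$ with $\mu(B(w,R))$ gives
\[
\frac{\mu(E_r\cap B(w,2R))}{\mu(B(w,2R))}\ge c\Bigl(\frac{r}{R}\Bigr)^{q}.
\]
After replacing $R$ by $R/2$ (and adjusting the constant and the range of $R$ via a routine doubling covering argument near $R\simeq\diam(E)$), this is precisely the ball-density estimate that defines $\ucodima(E)\le q$. Since this holds for all $q>q^\star$, we conclude $\ucodima(E)\le q^\star$.

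The only mild obstacle is bookkeeping: making sure the enlargement factors (the $2$ in $B(w,2R)$ and the passage from $r/2$-packings to $r$-covers) do not alter which exponents $q$ are admissible, and that the restriction $0<r<R<\diam(E)$ in the definition of $\ucodima$ is respected. These are handled routinely via the doubling property. The genuinely substantive step, the construction of a Frostman-type measure on $E\cap B(w,R)$, is already accomplished in Lemma~\ref{lemma:a to h}, so this corollary is best viewed as a clean reformulation of that lemma together with its trivial converse.
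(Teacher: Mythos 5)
Your proposal is correct and follows essentially the same route as the paper: the nontrivial direction is delegated to Lemma~\ref{lemma:a to h}, and the converse is the same maximal-packing-plus-doubling argument (the paper packs $E\cap B(w,R/2)$ at scale $r$ and doubles the balls, while you pack $E\cap B(w,R)$ at scale $r/2$ and enlarge to radius $r$ --- a cosmetic difference absorbed by the doubling constant). The bookkeeping issues you flag (enlargement factors, the range of $r$ and $R$) are handled, or glossed over, in exactly the same routine way in the paper's proof.
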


\begin{proof} 
Let $q\ge 0$ be such that~\eqref{eq: hausd} holds for every $w\in E$ and all $0<R<\diam(E)$,  
and let $\{B_i\}$ be a maximal packing of $E \cap B(w,R/2)$ with balls of radius $0<r<R$. 
Then $\{2B_i\}$ is a cover of $E \cap B(w,R/2)$, 
and so the doubling condition and~\eqref{eq: hausd} imply
\[
r^{-q}\mu(E_r\cap B(w,R)) \ge c r^{-q} \sum_i\mu(B_i) \ge c (2r)^{-q}\sum_i\mu(2B_i) \ge c R^{-q}\mu(B(w,R)).
\]
Thus $\ucodima(E)$ gives a lower bound for exponents satisfying~\eqref{eq: hausd}.

On the other hand, Lemma~\ref{lemma:a to h} shows that there can not be a
larger lower bound for these $q$, and thus $\ucodima(E)$ is the infimum, 
as was required.
\end{proof}

\begin{proof}[Proof of Theorem~\ref{thm:main fat}]
We assumed that $\ucodima(\Omega^c)<p-\beta$ and $p-\beta>1$, and thus
we can choose $q>1$ so that $\ucodima(\Omega^c)<q<p-\beta$.
Lemma~\ref{lemma:a to h} then implies that 
\begin{equation}\label{eq:hausd}
\Ha^{\mu,q}_{R}\big(\Omega^c \cap B(w,R)\big)\ge  C\,R^{-q}\mu(B(w,R))
\end{equation}
for every $w\in \Omega^c$ and all $0<R<\diam(\Omega^c)$,
with a constant $C>0$ independent of $w$ and $R$.
By~\cite[Thm.~4.1]{LPAMS2}, this condition is sufficient for $\Omega$ 
to admit a $(p,\beta)$-Hardy inequality, as desired. 
The following remarks are however in order here: 

The condition in~\cite[Thm.~4.1]{LPAMS2}
actually requires that
\begin{equation}\label{eq:hausd alt}
\Ha^{\mu,q}_{\dom(x)}\big(\bdry\Omega \cap B(x,2\dom(x))\big)\ge  
C\,\dom(x)^{-q}\mu\big(B(x,2\dom(x))\big)
\end{equation}
for all $x\in\Omega$, where $\dom(x)=\dist(x,\bdry\Omega)$.
Recall from Section~\ref{sect:dim} that
the validity of a Poincar\'e inequality implies that $X$
is quasiconvex, and thus 
$\dist(x,\bdry\Omega)\simeq \dist(x,\Omega^c)$, and so the
different distance function 
causes no problems here. Moreover,
inspecting the proofs in~\cite{LPAMS2} one sees that 
for $\beta\le 0$ the assumption~\eqref{eq:hausd alt}
can be replaced in Lemma~3.1(a) of~\cite{LPAMS2} with
the condition~\eqref{eq:hausd}. One subtlety here is the
case when $\Omega$ is unbounded, since then~\eqref{eq:hausd}
is needed for all radii $0<R<\infty$, 
and thus we have to assume that in this case
$\Omega^c$ is unbounded as well.
Once Lemma~3.1(a) of~\cite{LPAMS2} is established, the
$(p,\beta)$-inequalities for $\beta>0$ follow just like in the
proof of~\cite[Thm.~4.1]{LPAMS2}; the idea is the same as in
the proof of Theorem~\ref{thm:main} of the present paper.

Let us also remark that the proofs in~\cite{LPAMS2} require the
validity of a $p_0$-Poincar\'e inequality for $1\le p_0<p$,
which is guaranteed by the self-improvement result of Keith and Zhong~\cite{KeZ}.
\end{proof}

\begin{remark}
Actually both 
the \emph{inner boundary density} of~\eqref{eq:hausd alt}
and the complement density~\eqref{eq:hausd},
with an exponent $1\le q<p$, are equivalent to
the \emph{uniform $p$-fatness} of
$\Omega^c$, see~\cite{KLT}.
The deep fact that (also) uniform fatness is a
self-improving condition (see~\cite{Lewis,BMS}) 
is essential in the necessity part of this claim. 
\end{remark}

\section{Necessary conditions}\label{sect:nec}

In this section we extend the previously known necessary conditions for Hardy inequalities
to cover also weighted Hardy inequalities in metric measure spaces. In~\cite{KZ} and~\cite{LT},
such conditions were obtained in the unweighted case $\beta=0$ in metric spaces,
and in~\cite{LMM} for weighted inequalities in the Euclidean setting.
Let us mention here that actually no Poincar\'e inequalities are
needed to establish the results in this section, so we only need to
assume that $\mu$ is doubling.

We have the following generalization of~\cite[Thm.~1.1]{LMM} 
and~\cite[Thm.~6.1]{LT}: 

\begin{thm}\label{thm:dich for bdry}
Let $1\le p<\infty$ and $\beta\neq p$, and
assume that $\Omega\sub X$ admits a $(p,\beta)$-Hardy
inequality. Then there exists $\eps>0$, depending only on the given data,
such that either
\[\codimh(\Omega^c)<p-\beta-\eps\quad\text{ or }\quad
 \lcodima(\Omega^c)>p-\beta+\eps.\]
In particular, $\codimh(\Omega^c)<p-\beta$ or $\lcodima(\Omega^c)>p-\beta$.
\end{thm}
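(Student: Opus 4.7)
The plan is to extract a local dichotomy from the Hardy inequality at each scale and then apply a self-improvement mechanism in each alternative to produce the quantitative gap $\eps$. The ``in particular'' clause is an immediate consequence of the $\eps$-gap version. When $\beta>p$, the conclusion $\lcodima(\Omega^c)>p-\beta+\eps$ is trivial for any $\eps<\beta-p$ since $\lcodima\ge 0$, so I concentrate on the main case $\beta<p$.

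The first step is to prove the following \emph{local} dichotomy: for every ball $B=B(w,R)$ with $w\in\Omega^c$ and $0<R<\diam(\Omega^c)$, the $(p,\beta)$-Hardy inequality forces either the thick alternative
\[ \Ha^{\mu,p-\beta}_{R}\bigl(\Omega^c\cap 2B\bigr)\ge c_0\,R^{-(p-\beta)}\mu(B) \]
or the thin alternative
\[ \int_{B}\dom(y)^{\beta-p}\,d\mu(y)\le C_0\,R^{\beta-p}\mu(B), \]
with constants $c_0,C_0>0$ depending only on the given data. To prove this, suppose the thick alternative fails at $B$, so for every $\eta>0$ one can find a cover $\{B_i=B(y_i,r_i)\}_i$ of $\Omega^c\cap 2B$ with $y_i\in\Omega^c$, $r_i<R$, and $\sum_i r_i^{-(p-\beta)}\mu(B_i)<\eta R^{-(p-\beta)}\mu(B)$. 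Build a test function $u=\psi\cdot\min_i\vphi_i\in\Lip_0(\Omega)$, where $\psi$ is a Lipschitz cutoff supported in $2B$ with $\psi\equiv 1$ on $B$ and $g_\psi\le C/R$, and $\vphi_i(x)=\min\bigl(\dist(x,B_i)/r_i,\,1\bigr)$, so that $u\equiv 1$ on $B\setminus\bigcup_i 2B_i$ while $u$ vanishes in a neighbourhood of $\Omega^c\cap 2B$. Inserting $u$ into the $(p,\beta)$-Hardy inequality and splitting the right-hand side into the outer annulus $2B\setminus B$ (where $\dom\lesssim R$ and $g_\psi\lesssim 1/R$) and the inner annuli $2B_i\setminus B_i$ (where $\dom\lesssim r_i$ and $g_{\vphi_i}\lesssim 1/r_i$) yields
\[ \int_{B\setminus\bigcup_i 2B_i}\dom^{\beta-p}\,d\mu \le C\,R^{\beta-p}\mu(B)+C\sum_i r_i^{\beta-p}\mu(B_i)\le C'\,R^{\beta-p}\mu(B). \]
Letting $\eta\to 0$ and using $\mu(\Omega^c)=0$ (forced in this alternative since $\codimh(\Omega^c)>0$) recovers the thin alternative.

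Self-improvement in each alternative now delivers the required $\eps$. If the thick alternative holds at some $B$, it is equivalent to a codimension-$(p-\beta)$ version of uniform fatness of $\Omega^c$ in the sense of~\cite{KLT}, which is self-improving by the classical arguments of~\cite{Lewis,BMS}; the exponent $p-\beta$ then improves to $p-\beta-\delta$ for a universal $\delta>0$, so $\codimh(\Omega^c)\le p-\beta-\delta$. If instead the thin alternative holds at every such $B$, then $\Omega^c$ satisfies the Aikawa condition~\eqref{eq:aikawa} with exponent $p-\beta$ uniformly, and Lemma~\ref{lemma:aikawa si} upgrades this exponent to $p-\beta+\delta$, giving $\lcodima(\Omega^c)\ge p-\beta+\delta$. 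Setting $\eps=\delta/2$ completes the argument. The main obstacle will be the estimation of $g_u^p\dom^\beta$ on the inner annuli when $\beta<0$: the naive bound $\dom\le 2r_i$ on $2B_i$ goes in the wrong direction for a negative power, so one must further subdivide each annulus $2B_i\setminus B_i$ into dyadic Whitney-type sub-balls at scales $2^{-k}r_i$ and sum a geometric series that converges precisely because $\beta<0$; an analogous subdivision of the outer annulus $2B\setminus B$ handles the $\psi$-term when $\beta<0$.
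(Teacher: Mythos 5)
Your thin alternative is essentially the paper's argument (a cutoff test function inserted into the Hardy inequality yields the Aikawa condition~\eqref{eq:aikawa} with exponent $p-\beta$, which Lemma~\ref{lemma:aikawa si} upgrades to $p-\beta+\delta$), but the thick alternative contains a genuine gap. Your local dichotomy yields, globally, ``thick at \emph{some} ball or thin at \emph{all} balls''. In the first case you only obtain $\Ha^{\mu,p-\beta}_{R}(\Omega^c\cap 2B)>0$ at a single ball, hence $\codimh(\Omega^c)\le p-\beta$ with no gap; positivity of the content at exponent $p-\beta$ does not imply positivity at $p-\beta-\delta$. The claim that this single-ball estimate ``is equivalent to a codimension-$(p-\beta)$ version of uniform fatness'' is false --- uniform fatness is a condition at all balls and all scales --- so the self-improvement of \cite{Lewis,BMS} cannot be invoked (and it would in any case require Poincar\'e inequalities, which the paper deliberately avoids in this section). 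There is a second, related defect: to enter the thin alternative you need the codimension-$(p-\beta)$ content to vanish, but the negation of $\codimh(\Omega^c)<p-\beta-\eps$ only kills contents of codimension $q<p-\beta-\eps$. The paper resolves both problems with Proposition~\ref{prop:hardy si}: the Hardy inequality self-improves from $\beta$ to $\beta+2\eps$, so the thin-alternative argument can be run at the shifted exponent $p-\beta-2\eps$, whose content does vanish when $\codimh(\Omega^c)\ge p-\beta-\eps$; Lemma~\ref{lemma:aikawa si} then gives $\lcodima(\Omega^c)\ge p-\beta-2\eps+\delta$, and choosing $\eps<\delta/3$ produces the gap. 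In other words, the $\eps$ on the $\codimh$ side is obtained by contraposition through the thin alternative, not by improving a fatness condition.

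Two further points. Your treatment of $\beta<0$ is not convincing: on an annulus $2B_i\setminus B_i$ the distance $\dom$ can be made arbitrarily small by \emph{other} pieces of $\Omega^c$ (other balls $B_j$ of the cover entering that annulus), so a dyadic subdivision relative to $B_i$ does not control $\dom^\beta$ for $\beta<0$; the paper avoids this entirely by first reducing to $\beta\ge 0$ via \cite[Thm.~2.2]{lesi}, which converts a $(p,\beta)$-Hardy inequality with $\beta<0$ into a $(p-\beta,0)$-Hardy inequality. Finally, the passage ``letting $\eta\to 0$'' requires the covers to be nested with radii tending to zero uniformly so that monotone convergence applies (this is the content of Lemma~\ref{lemma:over bdry}); with a fixed threshold $c_0$ in your thick alternative you only get one cover, not a vanishing sequence.
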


Here the ``given data'' means the parameters $p$ and $\beta$
and the constants in the doubling condition and in the
assumed Hardy inequality. Our next result gives a local version of such a dimension dichotomy; see~\cite[Thm.~5.3]{LMM} for the Euclidean case
and~\cite[Thm.~6.2]{LT} for the case $\beta=0$.

\begin{thm}\label{thm:KZ}
Let $1\le p<\infty$ and $\beta \neq p$, and
assume that $\Omega\sub X$ admits a $(p,\beta)$-Hardy inequality.
Then there exists $\eps>0$, 
depending only on the given data,
such that for each ball $B_0\sub X$ either
\[
\codimh(2B_0\cap \Omega^c)<p-\beta-\eps
\]
or 
the Aikawa condition~\eqref{eq:aikawa} holds with an exponent $q>p-\beta+\varepsilon$ for all $w\in \Omega^c\cap B_0$ and all $0<r<\rad(B_0)$.
\end{thm}

Here the factor $2$ in $2B_0$ is not essential 
(but convenient), any fixed $L>1$ can be used instead.

\begin{remark}
We can not in general conclude in Theorem~\ref{thm:KZ} that either $\codimh(2B_0\cap \Omega^c)<p-\beta-\varepsilon$ or $\lcodima(B_0\cap \Omega^c)>p-\beta+\varepsilon$, since the latter would require the Aikawa condition for all $0<r<\diam(X)$, and this we can not reach under the assumptions of the theorem. Nevertheless, if we further assume that there is a constant $C>0$ and an exponent $s>p-\beta$ such that
\begin{equation}\label{eq:meas_bound}
\frac{\mu(B(x,r))}{\mu(B(x,R))}\le C\Bigl(\frac r R\Bigr)^s
\end{equation}
for all $x\in X$ and all $0<r<R<\diam(X)$, then it is possible to conclude in the setting of Theorem~\ref{thm:KZ} that either $\codimh(2B_0\cap \Omega^c)<p-\beta-\varepsilon$ or $\lcodima(B_0\cap \Omega^c)>p-\beta+\varepsilon$.
The main idea here is that the Aikawa condition, for all $0<r<\rad(B_0)$, implies that also the condition in the definition of the lower Assouad codimension holds for all $0<r<R<\rad(B_0)$ with some exponent $t>p-\beta+\varepsilon$ (cf.~Remark~\ref{rmk:LT}), while for other radii $0<r<R<\diam(X)$ the latter condition follows with the help of the above relative measure bound~\eqref{eq:meas_bound} (possibly with another $\varepsilon>0$). Notice, in particular, that \eqref{eq:meas_bound} holds in a $Q$-regular space for $s=Q$. 
\end{remark}

Recall that our sufficient conditions for Hardy inequalities
were given in terms of 
$\lcodima(\Omega^c)$ and $\ucodima(\Omega^c)$.
However, in the above necessary conditions 
it is not possible to replace $\codimh$ by the (larger)
$\ucodima$ in either of the theorems, cf.\ the discussion in
Section~\ref{sect:sharp examples}.
Also the assumption $\beta\neq p$ is essential in both of the theorems,
as the result need not hold for the $(p,p)$-Hardy inequality, see~\cite{LMM}.
On the other hand, for $\beta>p$ the claims reduce to trivialities, since always
$\lcodima(E)\ge 0$.

One important ingredient in the proofs of these
necessary conditions is the following self-improvement result for Hardy inequalities.

\begin{proposition}\label{prop:hardy si}
Let $1 \le p<\infty$ and $\beta\in\R$,
and assume that $\Omega\sub X$ admits a $(p,\beta)$-Hardy inequality. 
Then there exists $\eps>0$, 
depending only on the given data,
such that $\Omega$ admits $(p,\tilde\beta)$-Hardy inequalities
whenever 
$\beta-\eps\le\tilde\beta\le\beta+\eps$.
Moreover, the constant $C>0$ in all these Hardy 
inequalities can be chosen to
be independent of the particular $\tilde\beta$. 
\end{proposition}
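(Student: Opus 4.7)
The plan is to deduce the $(p,\tilde\beta)$-Hardy inequality directly from the given $(p,\beta)$-Hardy inequality, applied to the modified test function $v=u\,\dom^{s}$ with $s=(\tilde\beta-\beta)/p$, and then absorb an ``error'' term for small $|s|$. No Gehring-type self-improvement (such as Lemma~\ref{lemma:aikawa si}) is needed, consistent with the remark at the start of Section~\ref{sect:nec} that Poincar\'e inequalities play no role here. Given $u\in\Lip_0(\Omega)$, the compactness of $\spt u\sub\Omega$ guarantees that $\dom$ is bounded above and bounded away from $0$ on $\spt u$, so $\dom^{s}$ is Lipschitz there and $v\in\Lip_0(\Omega)$ is a legitimate test function. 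Since $\dom$ is $1$-Lipschitz, a chain rule gives $|s|\,\dom^{s-1}$ as an upper gradient of $\dom^{s}$ on $\Omega$, and the product rule for upper gradients (cf.~\cite[Thm.~2.15]{BB}) yields
\begin{equation*}
g_v \;=\; g_u\,\dom^{s}\;+\;|s|\,|u|\,\dom^{s-1}
\end{equation*}
as an upper gradient of $v$.

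The key point of the choice of $s$ is that $|v|^{p}\dom^{\beta-p}=|u|^{p}\dom^{ps+\beta-p}=|u|^{p}\dom^{\tilde\beta-p}$. Applying the assumed $(p,\beta)$-Hardy inequality to $v$ and using $(a+b)^{p}\le 2^{p-1}(a^{p}+b^{p})$ on $g_v^{p}\dom^{\beta}$ gives
\begin{equation*}
\int_\Omega |u|^{p}\dom^{\tilde\beta-p}\,d\mu \;\le\; C\,2^{p-1}\!\int_\Omega g_u^{p}\,\dom^{\tilde\beta}\,d\mu \;+\; C\,2^{p-1}|s|^{p}\!\int_\Omega |u|^{p}\dom^{\tilde\beta-p}\,d\mu,
\end{equation*}
where $C$ is the constant in the original $(p,\beta)$-Hardy inequality. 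Because $\spt u$ is a compact subset of $\Omega$ on which $\dom$ is bounded below by a positive constant, the integral on the left is finite, so it is legitimate to move the second right-hand term over to the left.

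Choosing $\eps>0$ so small that $C\,2^{p-1}(\eps/p)^{p}\le 1/2$, any $\tilde\beta$ with $|\tilde\beta-\beta|\le\eps$ satisfies $|s|\le\eps/p$, and the resulting absorption delivers the $(p,\tilde\beta)$-Hardy inequality with the uniform constant $C\,2^{p}$, independent of the particular $\tilde\beta$. This gives both conclusions of the proposition. The only genuine technical point is verifying that $v$ is an admissible test function and that the displayed formula for $g_v$ is a bona fide upper gradient; both are routine once one observes that everything happens on $\spt u$, where $\dom$ is comparable to a positive constant. Note that this proof works symmetrically for $\tilde\beta>\beta$ and $\tilde\beta<\beta$, since only $|s|$ enters the estimate.
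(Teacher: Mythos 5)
Your proof is correct and is essentially the approach the paper invokes (and omits): multiply the test function by $\dom^{s}$ with $s=(\tilde\beta-\beta)/p$, apply the given $(p,\beta)$-Hardy inequality, and absorb the small error term --- this is precisely the ``$s=0$'' case of~\cite[Lemma~2.1]{lesi} that the paper cites, and your use of the product/chain rule to build $g_v$ mirrors the paper's own treatment in the proof of Theorem~\ref{thm:main}. The observation that compactness of $\spt u$ in $\Omega$ makes $\dom^{s}$ Lipschitz on the relevant set, the algebra giving $ps+\beta=\tilde\beta$, and the resulting uniform constant $C\,2^{p}$ are all handled correctly.
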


The proof of Proposition~\ref{prop:hardy si}
is almost identical to the Euclidean case, which follows from
the case $s=0$ of~\cite[Lemma~2.1]{lesi},
so we omit the details. 
Notice in addition that while~\cite[Lemma~2.1]{lesi} is formulated
only for $1<p<\infty$, the same proof actually works also when $p=1$.

Another fact that we need in the proofs of Theorems~\ref{thm:dich for bdry}
and~\ref{thm:KZ} is that
if a part of the complement of $\Omega$
is small enough,  
then the test functions for the Hardy inequalities 
need not vanish in that particular part of $\Omega^c$.

\begin{lemma}\label{lemma:over bdry}
Let $1\le p<\infty$ and $0\le \beta < p$, and
assume that $\Omega\sub X$ admits a $(p,\beta)$-Hardy inequality
with a constant $C_0>0$.
Assume further that  $U\sub X$ is an open set
such that
\begin{equation}\label{eq: compl is small}
\Ha_{\diam(U)}^{\mu,p-\beta}(U\cap \Omega^c)=0.
\end{equation}
Then a $(p,\beta)$-Hardy inequality holds for all
$u\in \Lip_0(\Omega\cup U)$
with a constant $C_1=C_1(C_0,p)>0$. 
\end{lemma}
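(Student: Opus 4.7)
The plan is to approximate $u\in\Lip_0(\Omega\cup U)$ by cutoff versions lying in $\Lip_0(\Omega)$ that vanish near $U\cap\Omega^c$, apply the given $(p,\beta)$-Hardy inequality on $\Omega$ to these cutoffs, and finally pass to the limit by Fatou's lemma.

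First I would use $\Ha_{\diam(U)}^{\mu,p-\beta}(U\cap\Omega^c)=0$ to pick, for each $n\in\N$, an open-ball cover $\{B_i^{(n)}\}_i$ of $U\cap\Omega^c$ with $r_i^{(n)}:=\rad(B_i^{(n)})\le\diam(U)$ and
\begin{equation*}
\sum_i (r_i^{(n)})^{-(p-\beta)}\mu(B_i^{(n)})<2^{-n};
\end{equation*}
after discarding superfluous balls we may assume that each $B_i^{(n)}$ meets $\Omega^c$, so that $\dom\le 3r_i^{(n)}$ on $2B_i^{(n)}$. Define a Lipschitz cutoff $\varphi_n(x):=\inf_i\min\bigl(1,\dist(x,B_i^{(n)})/r_i^{(n)}\bigr)$, which satisfies $\varphi_n\equiv 0$ on $\bigcup_i B_i^{(n)}$ and $\varphi_n\equiv 1$ off $\bigcup_i 2B_i^{(n)}$, and admits $g_n:=\sum_i(r_i^{(n)})^{-1}\Char{2B_i^{(n)}}$ as an upper gradient. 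Then $v_n:=u\varphi_n$ lies in $\Lip_0(\Omega)$: on the portion $\bdry\Omega\setminus U\sub X\setminus(\Omega\cup U)$ the compactness of $\spt(u)$ in the open set $\Omega\cup U$ forces $u$ to vanish on a whole neighborhood, while $\bdry\Omega\cap U\sub U\cap\Omega^c$ lies in the open set $\bigcup_i B_i^{(n)}$ where $\varphi_n\equiv 0$.

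Applying the assumed Hardy inequality to $v_n$, using the Leibniz-type upper gradient $g_{v_n}\le \varphi_n g_u+|u|g_n$ (cf.\ \cite[Thms.~2.15--2.16]{BB}) and $(a+b)^p\le 2^{p-1}(a^p+b^p)$, one gets
\begin{equation*}
\int_\Omega|v_n|^p\dom^{\beta-p}\,d\mu
 \le 2^{p-1}C_0\int_\Omega g_u^p\dom^\beta\,d\mu
  + 2^{p-1}C_0\|u\|_\infty^p\int_\Omega g_n^p\dom^\beta\,d\mu.
\end{equation*}
The hypothesis $\beta\ge 0$ now enters critically via $\dom^\beta\le (3r_i^{(n)})^\beta$ on $2B_i^{(n)}$, and combined with doubling it yields
\begin{equation*}
\int_\Omega g_n^p\dom^\beta\,d\mu
\le C\sum_i (r_i^{(n)})^{\beta-p}\mu(2B_i^{(n)})
\le C'\sum_i (r_i^{(n)})^{-(p-\beta)}\mu(B_i^{(n)})< C'\,2^{-n}.
\end{equation*}

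For the passage to the limit, the same estimate together with $r_i^{(n)}\le\diam(U)$ gives $\mu\bigl(\bigcup_i 2B_i^{(n)}\bigr)\le C\diam(U)^{p-\beta}\cdot 2^{-n}$, which is summable in $n$; by the Borel--Cantelli lemma $\varphi_n(x)=1$ eventually for $\mu$-a.e.\ $x\in\Omega$, so $|v_n|^p\dom^{\beta-p}\to|u|^p\dom^{\beta-p}$ a.e. Fatou's lemma combined with the two displays above produces
\begin{equation*}
\int_\Omega|u|^p\dom^{\beta-p}\,d\mu
\le\liminf_n\int_\Omega|v_n|^p\dom^{\beta-p}\,d\mu
\le 2^{p-1}C_0\int_\Omega g_u^p\dom^\beta\,d\mu,
\end{equation*}
giving the claim with $C_1=2^{p-1}C_0$. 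The main obstacle, beyond the support bookkeeping that places $v_n$ in $\Lip_0(\Omega)$, is the observation that the Hausdorff co-content hypothesis is of exactly the right strength for the error term to vanish: the factor $(r_i^{(n)})^{-p}$ from $g_n^p$ is absorbed by $(r_i^{(n)})^\beta$ from $\dom^\beta$ and by $\mu(B_i^{(n)})$ from the measure, leaving precisely the quantity $\sum (r_i^{(n)})^{-(p-\beta)}\mu(B_i^{(n)})$ controlled by the assumption.
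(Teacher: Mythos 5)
Your argument follows the same strategy as the paper's proof: cover the negligible piece of the complement by balls whose codimension-$(p-\beta)$ content sum is small, multiply $u$ by a cutoff vanishing on those balls, apply the assumed Hardy inequality on $\Omega$ to the truncation, and observe that the cutoff's contribution is controlled by exactly the content sum (this is where $\beta\ge 0$ enters, via $\dom\lesssim r_i$ on the doubled balls, just as in the paper). The one genuine difference is the passage to the limit: the paper arranges the covers to be nested with radii shrinking uniformly to $0$ (using the lower mass bound $\mu(B_i)\gtrsim (r_i/R_0)^Q\mu(B_0)$ with $Q>p-\beta$), so that $u_j\nearrow u$ and monotone convergence applies, whereas you use $\sum_n\mu\bigl(\bigcup_i 2B_i^{(n)}\bigr)<\infty$ together with Borel--Cantelli and Fatou. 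Your route avoids the nesting bookkeeping, which is a mild simplification (though note your bound $\mu(\bigcup_i 2B_i^{(n)})\le C\diam(U)^{p-\beta}2^{-n}$ is only useful when $\diam(U)<\infty$). Two repairs are needed. First, since $\spt(u)\sub\Omega\cup U$ forces $\spt(u)\cap\Omega^c=\spt(u)\cap U\cap\Omega^c$ to be compact, you should pass to a finite subcover, as the paper does: with infinitely many balls and unbounded Lipschitz constants $1/r_i^{(n)}$, the infimum defining $\varphi_n$ need not be Lipschitz, so $v_n$ need not lie in $\Lip_0(\Omega)$. Second, with $g_n=\sum_i (r_i^{(n)})^{-1}\Char{2B_i^{(n)}}$ the quantity $g_n^p$ is the $p$-th power of a sum and is \emph{not} dominated by $\sum_i (r_i^{(n)})^{-p}\Char{2B_i^{(n)}}$ without bounded overlap of the balls; take instead the pointwise supremum $\sup_i (r_i^{(n)})^{-1}\Char{2B_i^{(n)}}$, which is still an upper gradient of the infimum (cf.\ \cite[Cor.~2.20]{BB}) and whose $p$-th power is bounded by the sum of the $p$-th powers. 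With these adjustments your proof is complete and yields the same conclusion and constant as the paper's.
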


\begin{proof}
Let $u\in \Lip_0(\Omega\cup U)$ with an upper gradient $g_u$.
By the definition of $\Ha_{\diam(U)}^{\mu,p-\beta}$,
there then exist, for a fixed $j\in\N$, 
balls $B_i^j=B(w_i,r_i)$ with $w_i\in\spt(u)\cap U\cap \Omega^c$ and $r_i\le \diam(U)$, $i=1,\dots,N_j$, so that
$\spt(u)\cap U\cap \Omega^c \sub\bigcup_{i=1}^{N_j} B_i^j$ and 
\begin{equation}\label{eq:Hsum}
\sum_{i=1}^{N_j} \mu(B_i^j){r_i}^{-p+\beta}
\leq \|u\|_\infty^{-p} 2^{-j}.
\end{equation}

Let $B_0=B(x_0,R_0)$ be a ball such that $\spt(u)\cap U\cap \Omega^c \subset \frac 1 2 B_0$. 
Iteration of the doubling condition shows that then there exists $Q>0$
and a constant $C>0$ such that
$\mu(B_i^j)/\mu(B_0)\ge C (r_i/R_0)^Q$ for all $i$ and $j$; see for instance~\cite[Lemma~3.3]{BB}. 
Moreover, since one can always choose
a larger $Q$ in this condition, we may assume that $Q>p-\beta$. Thus it follows 
from~\eqref{eq:Hsum} that, for each $j\in\N$,
all the radii $r_i$ (of the balls $B_i^j$) satisfy $r_i^{Q-p+\beta} \le C 2^{-j}$, where the constant $C>0$ may depend on
$u$ and $B_0$, but is independent of $j$. In particular, $r_i\to 0$ uniformly as $j\to\infty$,
and hence we may in addition assume that the covers $\{B_i^j\}_{i=1}^{N_j}$ are nested, i.e.,
$\bigcup_{i=1}^{N_{j+1}} B_i^{j+1} \subset \bigcup_{i=1}^{N_{j}} B_i^{j}$ for all $j\in\N$.

We now define cut-off functions
$\psi_j(x)=\min_i\{1,{r_i}^{-1}d(x,2B_i^j)\}$.
Each function $\psi_j$ has an upper gradient $g_{\psi_j}$
satisfying
$g_{\psi_j}^p\le\sum_i r_i^{-p}\Char{3B_i^j}$
(cf.~\cite[Cor.~2.20]{BB}).
Set $u_j=\psi_j u$.
Then $u_j\in\Lip_0(\Omega)$, and 
$g_{u_j} = g_{\psi_j}|u|+g_u$ 
is an upper gradient of $u_j$ (cf.~\cite[Thm.~2.15]{BB}).
In addition, since the covers were assumed to be nested and 
$r_i\to 0$ uniformly as $j\to\infty$, we have that
$u_j\leq u_{j+1}$ for each $j\in\N$ and $u_j\to u$ pointwise in $\Omega$.

Since 
$\beta\ge 0$, we have $\dom(y)^\beta\le r_i^{\beta}$
for all $y\in B_i^j$, and thus
the $(p,\beta)$-Hardy inequality for the
functions $u_j$ and estimate~\eqref{eq:Hsum} imply that
\[\begin{split}
\int_{\Omega} |u_j|^p \dom^{\beta-p}\,d\mu & 
     \le C  \left[\|u\|^p_\infty\int_\Omega g_{\psi_j}^p\dom^\beta\,d\mu 
        + \int_\Omega g_u^p\dom^\beta\,d\mu\right]\\
  &  \le  C\left[ 
      \|u\|^p_\infty \sum_{i=1}^{N_j} \mu(B_i^j){r_i}^{-p+\beta} + \int_\Omega g_u^p\dom^\beta\,d\mu\right]\\
  &  \le  C 2^{-j} + C \int_{\Omega} g_u^p\dom^\beta\,d\mu,
  \end{split}
\]
where $C=C(C_0,p)>0$.
The claim now follows by monotone convergence,
since $u_j(x)\to u(x)$ in $\Omega$.
\end{proof}

Let us record here the following consequence 
of the previous lemma, which gives an improvement to
Theorem~\ref{thm:main}:

\begin{coro}\label{coro:nonzero bdry}
Let $1\le p <\infty$ and $\beta<p-1$,
and assume that $X$ and $\Omega$ are as in Theorem~\ref{thm:main},
in particular that $\lcodima(\Omega^c)>p-\beta$.
Then a $(p,\beta)$-Hardy inequality holds for
all $u\in\Lip_b(\Omega)$.
\end{coro}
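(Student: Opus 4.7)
For $\beta<0$ the claim is already contained in Proposition~\ref{prop:hardy and assouad beta<0}, so the plan focuses on the range $0\le\beta<p-1$. The approach is to combine Theorem~\ref{thm:main} with Lemma~\ref{lemma:over bdry}: the latter allows test functions to be nonzero on a subset of $\Omega^c$ whose $(p-\beta)$-Hausdorff co-content vanishes, and it suffices to upgrade the hypothesis $\lcodima(\Omega^c)>p-\beta$ to such a co-content vanishing statement.

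The key preliminary---and where I expect the main technical work---is the implication $\lcodima(\Omega^c)>p-\beta\,\Longrightarrow\,\Ha_R^{\mu,p-\beta}(\Omega^c\cap B)=0$ for every ball $B\sub X$ and every $R>0$. Fix $p-\beta<q<q'<\lcodima(\Omega^c)$ and, given an arbitrary ball $B(z,R_0)$ meeting $\Omega^c$, pick $x_0\in\Omega^c\cap B(z,R_0)$. A maximal $r$-packing $\{B(w_i,r)\}_i$ of $\Omega^c\cap B(x_0,2R_0)$ by balls centered in $\Omega^c$ lies inside $(\Omega^c)_r\cap B(x_0,3R_0)$, so the definition of $\lcodima$ gives $\sum_i\mu(B(w_i,r))\lesssim(r/R_0)^{q'}\mu(B(x_0,3R_0))$. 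Replacing each packing ball by its triple yields a cover of $\Omega^c\cap B(x_0,2R_0)$ with co-content sum at exponent $q$ bounded by $C r^{q'-q}R_0^{-q'}\mu(B(x_0,3R_0))$, which tends to $0$ as $r\to 0$.

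Given $u\in\Lip_b(\Omega)$ with upper gradient $g_u$, let $K$ be the closure in $X$ of $\spt(u)$; since $K$ is bounded and closed in the complete doubling space $X$, it is compact. Extend $u$ to a function $v\in\Lip_0(X)$ by McShane extension followed by multiplication with a Lipschitz cutoff that equals $1$ on $K$ and is supported in a bounded open neighborhood $U$ of $K$. Then $v\equiv u$ on $\Omega$, and the preceding paragraph gives $\Ha_{\diam(U)}^{\mu,p-\beta}(U\cap\Omega^c)=0$. Lemma~\ref{lemma:over bdry}, whose Hardy-inequality hypothesis on $\Omega$ is precisely Theorem~\ref{thm:main}, now yields a $(p,\beta)$-Hardy inequality for $v\in\Lip_0(\Omega\cup U)$.

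To recover the inequality for $u$, choose the upper gradient of $v$ as $g_v=g_u\chi_\Omega+\infty\cdot\chi_{X\setminus\Omega}$; this is a valid (possibly infinite) Borel upper gradient of $v$, since any rectifiable curve meeting $X\setminus\Omega$ on a set of positive length already has infinite integral, while for curves contained in $\overline{\Omega}$ the required inequality reduces to the upper gradient inequality for $u$ combined with the continuous extension of $u$ to $\partial\Omega$. The Hardy inequality for $v$ then restricts to
\[
\int_\Omega|u|^p\dom^{\beta-p}\,d\mu=\int_\Omega|v|^p\dom^{\beta-p}\,d\mu\le C\int_\Omega g_v^p\dom^\beta\,d\mu=C\int_\Omega g_u^p\dom^\beta\,d\mu,
\]
which is the desired $(p,\beta)$-Hardy inequality for $u\in\Lip_b(\Omega)$. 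The only nontrivial work beyond citing Lemma~\ref{lemma:over bdry} is the codimension upgrade in the second paragraph; the extension and the choice of $g_v$ are routine modulo standard upper gradient bookkeeping.
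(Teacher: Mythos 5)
Your proof is correct and takes essentially the same route as the paper: both rely on Lemma~\ref{lemma:over bdry} applied with the validity of the Hardy inequality from Theorem~\ref{thm:main} as input. The paper is shorter at two points. First, it simply cites $\lcodima(\Omega^c)\le\codimh(\Omega^c)$ to get the needed co-content vanishing, and it chooses $U=(\Omega^c)_1$, so that $\Omega\cup U=X$ and hence $\Lip_0(\Omega\cup U)=\Lip_b(X)$; this sidesteps both your packing argument and the explicit McShane-plus-cutoff extension (which is, however, a useful detail to make explicit). Your packing derivation is sound, but note that invoking the $\lcodima$ estimate at scale $3R_0$ tacitly assumes $3R_0<\diam(\Omega^c)$; when $\Omega^c$ is bounded and $R_0$ is comparatively large, a short extra step (cover all of $\Omega^c$ at a scale below $\diam(\Omega^c)$) is needed. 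Second, the upper gradient $g_v=g_u\chi_\Omega+\infty\cdot\chi_{X\setminus\Omega}$ is an unnecessary device: both integrals in the Hardy inequality are over $\Omega$, where $v=u$, and an inspection of the proof of Lemma~\ref{lemma:over bdry} shows it only uses the upper gradient of the test function restricted to $\Omega$ (it appears only in $g_{u_j}=g_{\psi_j}|u|+g_u$ to bound the Hardy functional of the truncations $u_j\in\Lip_0(\Omega)$); hence $g_u$ itself already serves. The claim that your $\infty$-extended function is an upper gradient of $v$ along curves in $\overline{\Omega}$ that touch $\partial\Omega$ on an $\Ha^1$-null set is also more delicate than the proposal admits, but since the whole construction is dispensable, this does not affect the outcome.
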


\begin{proof}
For $\beta<0$ the claim follows directly from 
Proposition~\ref{prop:hardy and assouad beta<0}.
For $\beta\ge 0$ 
we have by Theorem~\ref{thm:main} that $\Omega$ admits
a $(p,\beta)$-Hardy inequality.
We now choose $U=(\Omega^c)_1=\{x\in X : d(x,\Omega^c)<1\}$.
Since $p-\beta<\lcodima(\Omega^c)\le \codimh(\Omega^c)$ and $\Omega^c\sub U$,
it follows in particular that
$\Ha_{\diam(U)}^{\mu,p-\beta}(U\cap \Omega^c)=0$.
Thus Lemma~\ref{lemma:over bdry}
implies that actually the 
$(p,\beta)$-Hardy inequality
holds for all $\Lip_0(\Omega\cup U) = \Lip_b(X)$,
and the claim follows.
\end{proof}

Lemma~\ref{lemma:over bdry} 
and the self-improvement of the Aikawa condition from 
Lemma~\ref{lemma:aikawa si} now yield the 
following result, which is essentially
a ``weighted'' version of~\cite[Lemma~2.4]{KZ}. 
For Euclidean spaces, a similar result can be found in~\cite[Lemma~5.2]{LMM},
but note that there the proof is different and especially avoids
the use of Gehring's Lemma, thus making the proof therein 
more self-contained. The approach of~\cite{LMM} 
could be used in the present setting of metric spaces as well, 
but we chose instead to follow the
outline of the proofs from~\cite{KZ} for the sake of brevity and
also to emphasize the role of 
the self-improvement result of Lemma~\ref{lemma:aikawa si}.

\begin{lemma}\label{lemma:int dist}
Let $1\le p<\infty$, $0 \le \beta < p$, and
assume that $\Omega\sub X$ admits a $(p,\beta)$-Hardy inequality.
Assume further that $B_0=B(x_0,R)\sub X$ is an open ball
such that
$\Ha_R^{\mu,p-\beta}(2B_0\cap \Omega^c)=0.$
Then there exists $\delta>0$, depending only on the given data,
such that the Aikawa condition~\eqref{eq:aikawa} holds 
with the exponent $q=p-\beta+\delta$ for all $w\in \Omega^c\cap B_0$ and all $0<r<R$.
\end{lemma}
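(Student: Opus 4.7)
The plan is to convert the Hardy inequality into an Aikawa-type integral estimate for $\dist(\cdot,\Omega^c)$ near $B_0$ by testing against simple Lipschitz cutoffs, and then to upgrade the exponent using Lemma~\ref{lemma:aikawa si}. The hypothesis $\Ha_R^{\mu,p-\beta}(2B_0\cap\Omega^c)=0$ plays two roles: it allows test functions to be non-zero on $2B_0\cap\Omega^c$, and it forces $\mu(2B_0\cap\Omega^c)=0$, since any admissible cover of a set $E$ contributes at least $R^{-(p-\beta)}\mu(E)$ to $\Ha_R^{\mu,p-\beta}(E)$.

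First I would invoke Lemma~\ref{lemma:over bdry} with $U=2B_0$ to upgrade the Hardy inequality so that it holds, with a constant depending only on the given data, for every $u\in\Lip_0(\Omega\cup 2B_0)$. Then for each $w\in B_0\cap\Omega^c$ and each $0<r\le R/2$, I would apply the upgraded inequality to the Lipschitz cutoff $\phi(y)=\max\{0,\min\{1,\,2-d(y,w)/r\}\}$, which belongs to $\Lip_0(2B_0)$, equals $1$ on $B(w,r)$, and admits the upper gradient $g_\phi\le r^{-1}\Char{B(w,2r)}$. Since $w\in\Omega^c$ and $\beta\ge 0$, on $B(w,2r)$ one has $\dom^\beta\le (2r)^\beta$, so doubling produces
\[
\int_{B(w,r)\cap\Omega}\dom^{\beta-p}\,d\mu \le C r^{-p}\int_{B(w,2r)\cap\Omega}\dom^\beta\,d\mu \le C r^{-(p-\beta)}\mu\big(B(w,r)\big).
\]
Because $\mu(2B_0\cap\Omega^c)=0$, the left-hand side equals $\int_{B(w,r)}\dist(y,\Omega^c)^{-(p-\beta)}\,d\mu(y)$, and the trivial pointwise bound $\dist(y,B_0\cap\Omega^c)\ge\dist(y,\Omega^c)$ then yields the Aikawa condition~\eqref{eq:aikawa} for $E:=B_0\cap\Omega^c$ at every center $w\in E$ and every scale $0<r\le R/2$.

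The remaining scales $R/2<r<\diam(E)\le 2R$ span only a bounded ratio, so they can be handled by covering $B(w,r)$ with boundedly many balls of radius $R/4$ centered at points of $E$, plus a harmless remainder on which $\dist(\cdot,E)\gtrsim R$, and applying the estimate just established on each small ball. In this way $E$ satisfies~\eqref{eq:aikawa} at every $x\in E$ and all $0<r<\diam(E)$, with exponent $q=p-\beta$ and a constant depending only on the given data.

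Finally, I would apply the self-improvement Lemma~\ref{lemma:aikawa si} to $E$ to obtain $\delta>0$, depending only on the data, such that~\eqref{eq:aikawa} holds for $E$ with exponent $p-\beta+\delta$; Remark~\ref{rmk:LT} then gives $\lcodima(B_0\cap\Omega^c)\ge p-\beta+\delta$, as required. I expect the main obstacle to be the careful bookkeeping in passing from the Hardy inequality, posed in terms of the global distance function $\dom$, to an Aikawa condition for the localized set $E=B_0\cap\Omega^c$ at all scales up to $\diam(E)$; the covering argument for large scales is standard but slightly delicate, since it must avoid relying on smallness of $\Omega^c$ outside $2B_0$.
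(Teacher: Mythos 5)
Your argument is correct and follows essentially the same route as the paper's own proof: invoke Lemma~\ref{lemma:over bdry} with $U=2B_0$ to admit test functions that are non-zero on $2B_0\cap\Omega^c$, test the Hardy inequality against a Lipschitz cutoff supported in $B(w,2r)$ to extract the Aikawa condition~\eqref{eq:aikawa} for $E=B_0\cap\Omega^c$ at scales $r\le R/2$, dispatch the bounded range of remaining scales by a finite covering, and then conclude via the self-improvement Lemma~\ref{lemma:aikawa si} and Remark~\ref{rmk:LT}. Your explicit observations that $\mu(2B_0\cap\Omega^c)=0$ and that $\dist(y,B_0\cap\Omega^c)\ge\dist(y,\Omega^c)$ make the passage from $\dom$ to $\dist(\cdot,E)$ cleaner than the paper's implicit treatment, but the mechanism is the same.
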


\begin{proof}
Let $w\in\Omega^c\cap B_0$ and $0<r<R/2$, and
denote $U=2B_0$ and $B=B(w,r)$, so that $2 B\sub 2B_0$. 
Define
$\vphi(x)=r^{-1} d\big(x,X\setminus 2B\big)$. 
Then $\vphi$ is a Lipschitz function with
a compact support in $\Omega\cup U$,
$\vphi\ge 1$ in $B$, and $g_\vphi = r^{-1}\Char{2B}$ 
is an upper gradient of $\vphi$.
By Lemma~\ref{lemma:over bdry} the $(p,\beta)$-Hardy 
inequality holds for $\vphi$, and since $\dom\le 2r$ in $2B$
and $\beta\ge 0$,
we obtain
\[
\int_B \dist(y,\Omega^c\cap B_0)^{\beta-p}\,d\mu(y)\leq \int_{2B} \vphi^p\dom^{\beta-p}\,d\mu
\leq C_1\int_{2B}g_\vphi^p\dom^\beta\,d\mu\leq C_2 \mu(B)r^{-p+\beta}.
\]

In particular, the Aikawa condition~\eqref{eq:aikawa} holds with $q=p-\beta>0$ (for $R/2\le r < R$ the claim follows
by covering $B(x,r)$ with smaller balls).
By Lemma~\ref{lemma:aikawa si} there then exists $\delta>0$ such
that the Aikawa condition holds with $p-\beta+\delta$, 
proving the claim.
\end{proof}

\begin{remark}\label{rmk:indep delta}
Since $\delta>0$ in Lemma~\ref{lemma:int dist}
depends only on the data associated to $X$, $\Omega$,
and the $(p,\beta)$-Hardy inequality, we have the following
uniformity result:
If $(q,\beta)$-Hardy inequalities hold for all
$p_1<q<p_2$ with a constant $C_1$,
we can choose $\delta>0$ in Lemma~\ref{lemma:int dist} to be independent of the particular $q$; 
more precisely, then $\delta=\delta(p_1,p_2,\beta,C_1,\Omega,X)>0$.
\end{remark}

We have now established enough tools to prove Theorem~\ref{thm:KZ}.
The proof follows the lines of the proofs of \cite[Corollary~2.7]{KZ} 
and~\cite[Thm.~6.2]{LT}, but we present the main ideas here for the
convenience of the reader.

\begin{proof}[Proof of Theorem~\ref{thm:KZ}]
Let $B_0=B(x_0,R)\sub X$.
It is clear that if $\beta>p$, we choose $\delta<\beta-p$ and then the Aikawa condition~\eqref{eq:aikawa}
holds with the exponent $q=p-\beta+\delta<0$, and so we only need to consider the case $\beta < p$.
First of all, 
we may assume that $\beta\ge 0$. Indeed,
if this is not the case, we have,
by~\cite[Thm.~2.2]{lesi}, that 
$\Omega$ admits a $(p-\beta,0)$-Hardy inequality
with a constant depending only on the data, 
and now we may consider this instead of the original $(p,\beta)$-Hardy inequality.
Notice that even though~\cite[Thm.~2.2]{lesi} is written in Euclidean spaces,
the proof applies almost verbatim in metric spaces.

By the self-improvement of Hardy inequalities from Proposition~\ref{prop:hardy si},
we find $\eps_1>0$ and $C_1>0$ 
such that $\Omega$ admits $(p,\tilde\beta)$-Hardy inequalities for all
$\beta\le\tilde\beta\le \beta+\eps_1$, and moreover the constant in all these inequalities 
can be taken to be $C_1$. In addition, we require that $\eps_1\leq p-\beta$.

Let then $0<\eps<\eps_1/2$ to be specified later. If
$\codimh(2B_0\cap \Omega^c)<p-\beta-\eps$,
the claim holds, and thus we may assume that
$\codimh(2B_0\cap \Omega^c)\ge p-\beta-\eps$.
It follows that
\[
\Ha_R^{\mu,q}(2B_0\cap \Omega^c)=0
\quad\text{ for } q = p-\beta-2\eps.
\]
As $\Omega$ admits a $(p,\beta+2\eps)$-Hardy inequality
and $p\ge\beta+\eps_1 > \beta + 2\eps$, 
we may use Lemma~\ref{lemma:int dist} 
to conclude that there exists 
$\delta>0$, independent of the particular choice of $\eps<\eps_1/2$ 
(cf.\ Remark~\ref{rmk:indep delta}), 
such that the Aikawa condition~\eqref{eq:aikawa} holds with the exponent
$q=p-(\beta+2\varepsilon)+\delta=p-\beta-2\varepsilon+\delta$. We now choose
$\eps<\min\{\eps_1/2,\delta/3\}$, and the claim follows.
\end{proof}

The global dimension dichotomy in
Theorem~\ref{thm:dich for bdry} follows along the same lines as above:
If $\Omega$ admits a $(p,\beta)$-Hardy inequality for $0\le \beta < p$, and
if in addition
$\codimh(\Omega^c)\ge p-\beta-\eps$, we
obtain from Lemma~\ref{lemma:int dist} that
\[
\int_{B} \dom(x)^{-p+\beta+2\eps-\delta}\,dx\leq C \mu(B)r^{-p+\beta+2\eps-\delta}
\]
for \emph{any} ball $B=B(w,r)$ with
$w\in\Omega^c$ and $0<r<\diam(X)$, where $C$ and $\delta$ are independent of $B$
and the particular $\eps$.
Choosing $\varepsilon>0$ as in the proof of Theorem~\ref{thm:KZ} shows that the Aikawa condition~\eqref{eq:aikawa} holds with an exponent
$q>p-\beta+\varepsilon$ for all $w\in\Omega^c$ and all $0<r<\diam(X)$. Hence we conclude from Remark~\ref{rmk:LT} that indeed $\lcodima(\Omega^c)>p-\beta+\varepsilon$.

\section{Combining thick and thin parts of the complement}\label{sect:combo}

Theorem~\ref{thm:main} gives a sufficient condition for Hardy inequalities in the case where the 
complement of $\Omega$ is thin. Conversely,
Theorem~\ref{thm:main fat} gives such a condition in the case where the complement is thick 
(everywhere and at all scales). Nevertheless, requiring the whole complement to be either 
thick or thin rules out all cases where the complement contains both
large and small pieces;
an easy (and well-understood) example is the punctured ball $B(0,1)\setminus\{0\}\subset\mathbb{R}^n$. 
In the next proposition we 
show how it is possible to combine the results of Theorems~\ref{thm:main} and~\ref{thm:main fat}
for this kind of domains.
A slightly different approach to Hardy inequalities in such domains, for $\beta=0$, 
was given in~\cite[Section~5]{LS}, and
in the Euclidean case earlier results for weighted inequalities 
can be found in~\cite{LMM}; both of these require
additional accessibility properties for $\Omega$.
On the other hand, the results from~\cite{LMM} also cover the case $\beta\ge p-1$, 
where such extra conditions are known to be indispensable
(cf.\ Section~\ref{sect:sharp examples}).

\begin{proposition}\label{prop:thick n thin}
Let $1 < p<\infty$ and $\beta<p-1$. 
If $\beta\le 0$, we assume that $X$ supports a $p$-Poincar\'e inequality, and
if $\beta > 0$ we assume that $X$ supports a $(p-\beta)$-Poincar\'e inequality.
Let $\Om_0\sub X$ be an open set satisfying $\ucodima(\Omega^c)<p-\beta$.
If $F\subset\ol\Om_0$ is a closed set with $\lcodima(F)>p-\beta$,
then $\Om=\Om_0\setminus F$ admits a $(p,\beta)$-Hardy inequality.

Moreover, a $(p,\beta)$-Hardy inequality (in $\Omega$) actually holds for all
$u\in\Lip_0(\Omega_0)$, i.e.\ the test functions need not vanish in $F\cap\Omega_0$.
\end{proposition}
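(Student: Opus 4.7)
The plan is to combine Theorems~\ref{thm:main} and~\ref{thm:main fat} by decomposing the distance function, first treating $\beta<0$ directly and then reducing the range $0\le\beta<p-1$ to it by the power-substitution scheme of Section~\ref{sect:main}. Write $d_0(x)=\dist(x,\Omega_0^c)$ and $d_F(x)=\dist(x,F)$, so that $\dom(x)=\min(d_0(x),d_F(x))$ on $\Omega=\Omega_0\setminus F$, and in particular $d_0,d_F\ge\dom$ there.

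For $\beta<0$, I would start from the elementary pointwise bound $\dom^{\beta-p}\le d_0^{\beta-p}+d_F^{\beta-p}$ (valid because $\beta-p<0$ turns the minimum into a maximum), which splits the left-hand side of the Hardy inequality into two integrals. Take $u\in\Lip_0(\Omega_0)$ extended by $0$ outside $\Omega_0$; since $\lcodima(F)>0$ forces $\mu(F)=0$, the first integral equals $\int_{\Omega_0}|u|^p d_0^{\beta-p}\,d\mu$, which Theorem~\ref{thm:main fat} applied to $\Omega_0$ (where $\ucodima(\Omega_0^c)<p-\beta$) bounds by $C_1\int_{\Omega_0}g_u^p d_0^{\beta}\,d\mu$. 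The restriction of $u$ to $X\setminus F$ lies in $\Lip_b(X\setminus F)$, and Corollary~\ref{coro:nonzero bdry} applied to $X\setminus F$ (using $\lcodima(F)>p-\beta>1$) bounds the second integral by $C_2\int_{X\setminus F}g_u^p d_F^{\beta}\,d\mu$. Since $\beta<0$, the inequalities $d_0,d_F\ge\dom$ on $\Omega$ give $d_0^{\beta},d_F^{\beta}\le\dom^{\beta}$; choosing $g_u$ to vanish outside $\spt u\subset\Omega_0$ and using $\mu(F)=0$ once more, both right-hand integrals reduce to $\int_\Omega g_u^p\dom^{\beta}\,d\mu$, which settles $\beta<0$ together with the ``moreover'' statement in that range.

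For $0\le\beta<p-1$, I would repeat the argument of the proof of Theorem~\ref{thm:main} in Section~\ref{sect:main} verbatim: for $u\in\Lip_0(\Omega)$ and $\beta_0>0$ small, substitute $v=|u|^{p/(p-\beta)}\dom^{\beta_0/(p-\beta)}$ into the $(p-\beta,-\beta_0)$-Hardy inequality in $\Omega$ (guaranteed by the previous step, provided $\beta_0<\lcodima(F)-(p-\beta)$ is small enough), and then close via H\"older's inequality and absorption. For the absorption step to succeed, the first-step constant in the $(p-\beta,-\beta_0)$-Hardy inequality must grow at most polynomially of degree strictly less than $p-\beta$ as $\beta_0\to 0^+$. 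The contribution coming from Proposition~\ref{prop:hardy and assouad beta<0} is $\le C^*\beta_0^{-1}$ and hence harmless, while controlling the contribution coming from Theorem~\ref{thm:main fat} requires a careful inspection of the proofs in~\cite{LPAMS2}; this constant-tracking is the main technical point of the proof.

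The power substitution yields the Hardy inequality only for $u\in\Lip_0(\Omega)$, and the ``moreover'' statement in the range $0\le\beta<p-1$ then follows from Lemma~\ref{lemma:over bdry} applied with $U=\Omega_0$: since $\lcodima(F)>p-\beta$ implies $\Ha^{\mu,p-\beta}_R(F)=0$ for all $R>0$ (by the $r$-neighborhood density characterization of $\lcodima$ together with the $\sigma$-boundedness of $F$ in the complete doubling space $X$), the hypothesis of the lemma is verified with $U\cap\Omega^c=\Omega_0\cap F\subset F$, and test functions may then be taken in $\Lip_0(\Omega\cup U)=\Lip_0(\Omega_0)$.
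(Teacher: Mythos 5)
Your pointwise decomposition $\dom^{\beta-p}\le d_0^{\beta-p}+d_F^{\beta-p}$ on $\Omega$, valid because $\beta-p<0$ turns the minimum into a maximum, is a clean and appealing observation, and the recombination via $d_0^\beta,d_F^\beta\le\dom^\beta$ (for $\beta<0$) is also correct. This is a genuinely different route from the paper, whose proof instead modifies the telescoping chain of Proposition~\ref{prop:hardy and assouad beta<0}, running it only up to a Whitney ball $B_1$ of $\Omega_0$ and then invoking the $(p,\beta)$-Hardy inequality in $\Omega_0$ to control the ``boundary term'' $|u_{4B_1}|$.

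However, there is a genuine gap. The control of the second integral $\int_\Omega|u|^p d_F^{\beta-p}\,d\mu$ rests on applying Corollary~\ref{coro:nonzero bdry} (hence Theorem~\ref{thm:main}) to the open set $X\setminus F$. Both of those require $X$ to be \emph{unbounded} --- this is exactly where the telescoping in~\eqref{eq: chain} needs $\vint_{4B^j}u\to0$ as $j\to\infty$. Proposition~\ref{prop:thick n thin} makes no unboundedness assumption on $X$, and it is non-vacuous in the bounded case (e.g.\ $X=\overline{B(0,1)}\sub\R^n$, $\Omega_0=B(0,1/2)$, $F=\{0\}$). In a bounded space the global Hardy inequality in $X\setminus F$ for all $u\in\Lip_b(X\setminus F)$ simply fails (take $u\equiv1$), and even for $u\in\Lip_0(\Omega_0)$ the chain argument of Proposition~\ref{prop:hardy and assouad beta<0} does not close because $u_X$ need not vanish. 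The paper sidesteps this by never chaining past scales comparable to $d_{\Omega_0}$: the chains terminate at a Whitney ball $B_1\in\W(\Omega_0)$, and the resulting terms $|u_{4B_1}|$ are absorbed using the $(p,\beta)$-Hardy inequality in $\Omega_0$, which holds by Theorem~\ref{thm:main fat} without any unboundedness of $X$. So your argument is fine when $X$ is unbounded, but the bounded case needs a local version of Theorem~\ref{thm:main} (or an argument like the paper's) that you have not supplied.

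A secondary point: for $0\le\beta<p-1$ you correctly identify that the absorption step requires the constant of the first-stage $(p-\beta,-\beta_0)$-Hardy inequality to be $O(\beta_0^{-1})$ (or at least $o(\beta_0^{-(p-\beta-1)})$) as $\beta_0\to0^+$, and you flag the dependence on~\cite{LPAMS2} as unresolved. The paper obtains exactly this $|\beta|^{-1}$ blow-up rate directly from its modified chain estimate (the geometric series in~\eqref{eq:first sum} and~\eqref{eq:2nd sum final} produce the $\frac{2^{-(M+2)\beta}}{1-2^\beta}$ factor, which is $\simeq|\beta|^{-1}$ near $0$), so it does not need to reopen~\cite{LPAMS2}. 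Your proposal, even once the unboundedness gap is repaired, would still owe the reader this constant tracking.
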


Since this result (in this generality) is new even in 
Euclidean spaces, let us formulate this special case as a corollary:

\begin{coro}\label{coro:thick rn thin}
Let $1 < p<\infty$ and $\beta<p-1$, and
assume that $\Om_0\sub \R^n$ 
is an open set satisfying $\ldima(\Omega^c)>n-p+\beta$.
If $F\subset\ol\Om_0$ is a closed set with $\udima(F)<n-p+\beta$,
a $(p,\beta)$-Hardy inequality holds 
in $\Omega=\Omega_0\setminus F$ for all
$u\in\Lip_0(\Omega_0)$.
\end{coro}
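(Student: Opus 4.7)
My plan is to derive the corollary directly from Proposition~\ref{prop:thick n thin} by recognizing that $\R^n$ fits the hypotheses of that proposition and by translating dimensions into codimensions via $n$-regularity.

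First, I would verify that $X = \R^n$ with Lebesgue measure satisfies all the standing and additional hypotheses of Proposition~\ref{prop:thick n thin}. Namely, $\R^n$ is complete and $n$-regular, so the doubling condition holds with $|\nabla u|$ serving as an upper gradient for $u \in \Lip(\R^n)$. Moreover, $\R^n$ supports a $1$-Poincar\'e inequality, hence a $q$-Poincar\'e inequality for every $q \geq 1$, which covers both the case $\beta \leq 0$ (requiring $p$-Poincar\'e) and the case $\beta > 0$ (requiring $(p-\beta)$-Poincar\'e).

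Next, I would translate the assumptions into codimension form. Since Lebesgue measure on $\R^n$ is $n$-regular, the identities recorded in Section~\ref{sect:dim} yield
\[
\udima(E) = n - \lcodima(E), \qquad \ldima(E) = n - \ucodima(E)
\]
for every $E \subset \R^n$. Applying these to $\Omega_0^c$ and to $F$ converts the hypotheses $\ldima(\Omega_0^c) > n - p + \beta$ and $\udima(F) < n - p + \beta$ into $\ucodima(\Omega_0^c) < p - \beta$ and $\lcodima(F) > p - \beta$, respectively, which are exactly the assumptions of Proposition~\ref{prop:thick n thin}.

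With these translations in place, Proposition~\ref{prop:thick n thin} applies and delivers a $(p,\beta)$-Hardy inequality in $\Omega = \Omega_0 \setminus F$ for all $u \in \Lip_0(\Omega_0)$, which is precisely the conclusion of the corollary. There is no genuine obstacle inside the corollary itself; the substantive work has been absorbed into Proposition~\ref{prop:thick n thin}, whose proof must combine the thick-complement inequality of Theorem~\ref{thm:main fat} (applied near $\Omega_0^c$) with the thin-complement inequality of Theorem~\ref{thm:main} (applied near $F$), using the elementary comparison $\dom(x) \simeq \min\{d_{\Omega_0}(x),\dist(x,F)\}$ for $x \in \Omega$ to glue the two estimates together.
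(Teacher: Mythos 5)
Your derivation is correct and is essentially the paper's own (implicit) proof: the corollary is stated as an immediate specialization of Proposition~\ref{prop:thick n thin} to $X=\R^n$, using $n$-regularity to convert $\ldima(\Omega_0^c)>n-p+\beta$ and $\udima(F)<n-p+\beta$ into $\ucodima(\Omega_0^c)<p-\beta$ and $\lcodima(F)>p-\beta$, with the $1$-Poincar\'e inequality of $\R^n$ supplying all required Poincar\'e hypotheses. (Your closing guess about how the proposition itself is proved is not quite the paper's chaining argument, but that is immaterial to the corollary.)
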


\begin{proof}[Proof of Proposition~\ref{prop:thick n thin}]
Again, it suffices to prove the claim for $\beta<0$, with a suitable control for the
constant, since then the claim for $\beta\ge 0$ follows just as in the proof of
Theorem~\ref{thm:main} from Section~\ref{sect:main}.
For $\beta<0$, the idea is to modify the proof of Proposition~\ref{prop:hardy and assouad beta<0}.
In the present case we can no longer ``chain to infinity''
as in \eqref{eq: chain}, but we can instead
use for sufficiently large balls centered at $F$ the fact that a $(p,\beta)$-Hardy inequality 
holds in $\Omega_0$.

Let $\W(\Omega_0)$ be a Whitney-type cover of $\Omega_0$ with balls
$B(x,c\dom(x))$, $x\in\Omega$, where $0<c<1/2$ is such that
the balls $4\lambda B$, $B\in\W(\Omega)$, have a uniformly bounded overlap
(see e.g.~\cite{BBS}); here $\lambda$ is the dilatation constant from the Poincar\'e
inequality.
Let $B_1,B_2,\dots\in\W(\Omega_0)$ be such that $F\cap\Omega_0\sub\bigcup_{i}B_i$.  

Without loss of generality, let us first consider $E:=B_1\cap F$, where $B_1=B(x,R)$.
Choose $k_0\in\Z$ so that $4\cdot2^{k_0} < R \le 4\cdot2^{k_0+1}$, 
and let $\B_k$, $N_k$, and $A_k$ for $k\le k_0$ be just as in the proof of 
Proposition~\ref{prop:hardy and assouad beta<0}
for this set $E$.
Then $4B\sub 4B_1$ for all $B\in\B_{k_0}$. 

Now let $u\in\Lip_0(\Omega)$.
We divide each $\B_k$, $k\le k_0$,
into two subsets as follows: We set for $B\in\B_k$ that $B\in\B_k^{(1)}$ if 
$|u_{4B_1}|\le \frac 1 2 |u_{4B}|$, and otherwise $B\in\B_k^{(2)}$.
For convenience, we also set $\B_{k_0+1}=\B_{k_0+1}^{(1)}=\B_{k_0+1}^{(2)}:=\{B_1\}$.
For $B\in\B_k^{(1)}$, $k\le k_0$, we then have
\[
|u_{4B}|\le |u_{4B}-u_{4B_1}|+|u_{4B_1}|\le |u_{4B}-u_{4B_1}|+\tfrac 1 2 |u_{4B}|,
\]
and thus
\[
\tfrac 1 2 |u_{4B}|\le \sum_{j=k}^{k_0-1}|u_{4B^j}-u_{4B^{j+1}}|+|u_{4B^{k_0}}-u_{4B_1}|,
\]
where $B=B^k$.
This is analogous to estimate~\eqref{eq: chain}, and, indeed, the balls $B\in\B_k^{(1)}$
can be treated just like in the proof of Proposition~\ref{prop:hardy and assouad beta<0}, yielding 
as in~\eqref{eq:2nd sum} and~\eqref{eq:2nd sum final} 
that
\begin{equation}\label{eq:2nd sum final TNT}
\begin{split}
 \sum_{k=-\infty}^{k_0+1} 2^{k(\beta-p)} \sum_{B\in\B_k^{(1)}}\int_{4B} |u_{4B}|^p\,d\mu 
& \le C \int_{4\lambda B_1} g_u^p \dom^\beta\,d\mu.
\end{split}
\end{equation}

On the other hand, for the balls $B\in\B_k^{(2)}$, $k\le k_0$, we have
by H\"older's inequality that
\[
|u_{4B}| <  2 |u_{4B_1}| \le  2\biggl(\vint_{4B_1}|u|^p\,d\mu\biggr)^{1/p},
\]
and since $d_{\Omega_0}(y)\simeq R$ for all $y\in 4B_1$, we obtain 
\begin{equation}\label{eq:to hardy}
|u_{4B}|^p \le C \frac{R^{p-\beta}}{\mu(B_1)}\int_{4B_1}|u|^p {d_{\Omega_0}}^{\beta-p}\,d\mu.
\end{equation}
Now pick $q$ such that $\lcodima(F)>q>p-\beta$, whence by definition
\[
\frac{\mu(E_{2^{k}})}{\mu(B_1)} \le C \frac{2^{kq}}{R^{q}}\quad \text{ for all } k\le k_0+1.
\]
Estimate~\eqref{eq:to hardy} and the bounded overlap of the balls $4B$, for $B\in\B_k^{(2)}$
with a fixed $k$, then imply
\begin{equation}\label{eq:2nd sum final TNT2}
\begin{split}
 \sum_{k=-\infty}^{k_0+1} 2^{k(\beta-p)} & \sum_{B\in\B_k^{(2)}}\int_{4B} |u_{4B}|^p\,d\mu 
 \le C \int_{4B_1}|u|^p {d_{\Omega_0}}^{\beta-p}\,d\mu 
    \sum_{k=-\infty}^{k_0+1} \frac{2^{k(\beta-p)}}{R^{\beta-p}}\frac{\mu(E_{2^{k}})}{\mu(B_1)}\\
 & \le C \int_{4B_1}|u|^p {d_{\Omega_0}}^{\beta-p}\,d\mu 
    \sum_{k=-\infty}^{k_0+1} \frac{2^{k(q+\beta-p)}}{R^{q+\beta-p}} 
    \le C \int_{4B_1}|u|^p {d_{\Omega_0}}^{\beta-p}\,d\mu,
\end{split}
\end{equation}
since $q+\beta-p>0$.

Following the proof of Proposition~\ref{prop:hardy and assouad beta<0},  
we can now combine~\eqref{eq:2nd sum final TNT} and~\eqref{eq:2nd sum final TNT2},
and we obtain as in~\eqref{eq:compute} that
\begin{equation}\label{eq:compute2}\begin{split}
 \int_{4B_1} & |u|^p  \dom^{\beta-p}\,d\mu  
       \le C \sum_{k=-\infty}^{k_0+1} \int_{A_k} |u|^p\dom^{\beta-p}\,d\mu\\
 & = \sum_{k=-\infty}^{k_0+1} 2^{k(\beta-p)} \sum_{B\in\B_k}\int_{4B} |u-u_{4B}|^p\,d\mu 
     + C \sum_{k=-\infty}^{k_0+1} 2^{k(\beta-p)} \sum_{B\in\B_k}\int_{4B} |u_{4B}|^p\,d\mu\\
 &  \le C \int_{4\lambda B_1} g_u^p \dom^\beta\,d\mu +  
       C \int_{4B_1}|u|^p {d_{\Omega_0}}^{\beta-p}\,d\mu.
\end{split}
\end{equation}
Note that here $A_{k_0+1}=4B_1\setminus N_{k_0}$,
and that the first integral in the second line can be estimated just like
in~\eqref{eq:first sum}.

Similar estimates hold of course for all balls $B_1,B_2,\dots$,
with constants independent of $i$.
Moreover, if $x\in \Omega\setminus\bigcup_i 4B_i$, then
$\dom(x)\ge C d_{\Omega_0}(x)$: If $\dom(x) = d_{\Omega_0}(x)$
the claim is trivial, so we may assume that
$\dom(x)=d(x,w)$ for some $w\in F$. Pick $B_i\ni w$. Since
$x\notin 4B_i$ and $\rad(B_i)\ge \tilde c d(w,\Omega_0)$ with $0<\tilde c<1$, 
it follows that
\[
\dom(x)=d(x,w)\ge \rad(B_i)\ge \tilde c d(w,\Omega_0)
\geq \tilde c d(x,\Omega_0) - \tilde cd(x,w)= \tilde c d(x,\Omega_0) - \tilde c\dom(x),
\]
and the claim follows. In particular $\dom(x)^{\beta-p}\le C d_{\Omega_0}(x)^{\beta-p}$
for these $x$, and thus estimate~\eqref{eq:compute2} for each $B_i$, the $(p,\beta)$-Hardy 
inequality for $\Omega_0$, and
the bounded overlap of the balls $4\lambda B_i$ (and thus of $4B_i$) yield
\begin{equation*}
\begin{split}
\int_{\Omega} |u|^p  \dom&^{\beta-p}\,d\mu 
    \le \int_{\Omega\setminus\bigcup_i 4B_i}  |u|^p  \dom^{\beta-p}\,d\mu 
       + \sum_{i} \int_{4B_i}  |u|^p  \dom^{\beta-p}\,d\mu\\
  & \le C \int_{\Omega}  |u|^p  {d_{\Omega_0}}^{\beta-p}\,d\mu 
       + C \sum_{i}\int_{4\lambda B_i} g_u^p \dom^\beta\,d\mu +  
       C \sum_{i} \int_{4B_i}|u|^p {d_{\Omega_0}}^{\beta-p}\,d\mu\\
  & \le C \int_{\Omega_0} |u|^p  {d_{\Omega_0}}^{\beta-p}\,d\mu 
       + C \sum_{i}\int_{4\lambda B_i} g_u^p \dom^\beta\,d\mu\\
  & \le C \int_{\Omega_0} g_u^p  {d_{\Omega_0}}^{\beta}\,d\mu 
       + C  \int_{\bigcup_{i}4\lambda B_i} g_u^p \dom^\beta\,d\mu\\
  & \le C  \int_{\Omega_0} g_u^p{d_{\Omega_0}}^\beta\,d\mu 
       + C   \int_{\Omega} g_u^p \dom^\beta\,d\mu 
    \le C  \int_{\Omega} g_u^p \dom^\beta\,d\mu;
\end{split}
\end{equation*}
we also used the facts that $d_{\Omega_0}(x)^\beta\le\dom(x)^\beta$ 
for all $x\in\Omega$ (since $\beta<0$) and that $\mu(F)=0$.

The above constant $C>0$ naturally depends on $\beta$, but for $-1<\beta<0$ close enough to $0$,
the dependence can be reduced again to the form $C=|\beta|^{-1}C^*$, where $C^*>0$ is independent of $\beta$;
the details are exactly the same as in the proof of Proposition~\ref{prop:hardy and assouad beta<0}. 
Hence $(p,\beta)$-Hardy inequalities, for $0\le \beta < p-1$, now follow
along the same lines as in the proof of the corresponding case of Theorem~\ref{thm:main} 
(cf.\ Section~\ref{sect:main}). 

Finally, regarding the boundary values, we see that in the above proof of the case $\beta<0$ 
it is not necessary for $u$ to vanish in $F$, and thus this case indeed holds for all
$u\in\Lip_0(\Omega_0)$. On the other hand, in the case $\beta\ge 0$ we can apply
Lemma~\ref{lemma:over bdry} with $U=\Omega_0$
(since $p-\beta<\lcodima(F)\le\codimh(\Omega_0\cap\Omega^c)$), and it follows again that
a $(p,\beta)$-Hardy inequality holds for all
$u\in\Lip_0(\Omega\cup\Omega_0)=\Lip_0(\Omega_0)$.
This concludes the proof.
\end{proof}

\section{Sharpness of the results}\label{sect:sharp examples}

We close the paper with an examination of the sharpness of our results.
In particular, we consider the necessity of the assumptions in our main
theorems.

It was already mentioned in the introduction that the bound $p-\beta$ is very natural for 
the dimensions in all of the sufficient and necessary conditions, and can not be
improved. Moreover, the bound $p-\beta$ for the lower Assouad codimension appears both in the
sufficient and necessary conditions in the cases where the complement is thin, and so it is obvious 
that $\lcodima$ is the optimal concept of dimension in this setting. However, 
when (a part of) the complement is thick, the sufficient conditions are given
in terms of the upper Assouad codimension, while in the necessary conditions
the Hausdorff codimension is used, and so these conditions do not quite meet. 
This raises the question whether it could be possible to improve the bounds in
either sufficient or necessary conditions by using a different
concept of dimension. Since the possibility to combine
thick and thin parts in the sufficient conditions (Proposition~\ref{prop:thick n thin})
immediately rules out such improvements in the global results, the sharpness of
the conditions (in terms of dimensions) is established if we show that
(i) $\codimh(2B_0\cap\Omega^c)$ can not be replaced by
$\ucodima(2B_0\cap\Omega^c)$ in Theorem~\ref{thm:KZ}, and
that on the other hand 
(ii) the local bound $\codimh(B\cap\Omega^c)<p-\beta$,
for all balls $B=B(w,r)$ with $w\in\Omega^c$,
does not suffice for the $(p,\beta)$-Hardy inequality in $\Omega$.

The following construction yields a counterexample for both (i) and (ii).

\begin{example}\label{ex:ex}
 We consider here, for simplicity, the unweighted case $\beta=0$ in $\R^n$,
 with $\mu$ being the usual Lebesgue measure. Then $|\nabla u|$ is an
 optimal upper gradient for each Lipschitz function $u$.

 Denote $w_j=(2^{-j},0,\dots,0)\in\R^n$ for $j\in\N$ and $0=(0,0,\dots,0)\in\R^n$, and
 let $F=\{w_j:j\ge 2\}\cup\{0\}\sub\R^n$ and
 $\Omega_1=B(0,2)\setminus F\sub\R^n$. Since $\ucodima(B(0,2)^c)=0$ and
 $\lcodima(F)=n$, we have by Proposition~\ref{prop:thick n thin}
 that $\Omega_1$ admits a $p$-Hardy inequality whenever
 $1<p<n$ (and $\Omega_1$ does not admit an $n$-Hardy inequality). 
 
 Next, we replace each point $w_j$ with a ball $B_j=B(w_j,2^{-2j})$, that is,
 we consider the domain
 $\Omega=B(0,2)\setminus\bigl(\bigcup_{j\ge 2}B_j \cup \{0\}\bigr)$.
 Then also $\Omega$ admits $p$-Hardy inequalities whenever
 $1<p<n$; this can be seen as follows:
 
 Write $\Omega'=B(0,2)\setminus\bigl(\bigcup_{j\ge 2}2B_j \cup \{0\}\bigr)$
 and $\Omega''=\bigcup_{j\ge 2} 2B_j\setminus B_j$, so that $\Omega=\Omega'\cup\Omega''$.
 For all $x\in\Omega'\sub\Omega_1$ we have 
 $d_{\Omega_1}(x)\ge d_{\Omega}(x) \ge \frac 1 2 d_{\Omega_1}(x)$, and thus, 
 using the $p$-Hardy inequality in $\Omega_1$, we have for all
 $u\in\Lip_0(\Omega)\sub \Lip_0(\Omega_1)$ that
 \begin{equation}\label{eq:omega'}
  \int_{\Omega'}|u|^p \dom^{-p}\,dx 
  \le C \int_{\Omega_1}|u|^p {d_{\Omega_1}}^{-p}\,dx \le 
  C \int_{\Omega_1} |\nabla u|^p \,dx 
  = C \int_{\Omega} |\nabla u|^p \,dx.
 \end{equation}
 On the other hand, for all $x\in\Omega''$ the complement of $\Omega$ near $x$
 satisfies the density condition $\Ha^{n}(\Omega^c\cap B(x,2\dom(x)))\ge C \dom(x)^n$, 
 and so it follows that even a stronger pointwise
 $1$-Hardy inequality holds for these points (cf.~\cite{haj,KLT}): If $u\in\Lip_0(\Omega)$
 and $x\in\Omega''$, then
 $|u(x)|\le C\dom(x) M_{2\dom(x)}|\nabla u|(x)$, where $M_{2\dom(x)}$ is the
 restricted Hardy--Littlewood maximal operator. 
 The $L^p$-boundedness of $M_{2\dom(x)}$ then yields
 \begin{equation}\label{eq:omega''}
  \int_{\Omega''}|u(x)|^p \dom(x)^{-p}\,dx 
  \le \int_{\Omega''} \bigl(M_{2\dom(x)}|\nabla u|(x)\bigr)^p \,dx \le 
  C \int_{\Omega} |\nabla u(x)|^p \,dx,
 \end{equation}
 and so the $p$-Hardy inequality for $\Omega$, for every $1<p<n$, 
 follows by combining~\eqref{eq:omega'} and~\eqref{eq:omega''}.

 Nevertheless, while it is clear that 
 $\codimh(\Omega^c\cap B(w,r))=\lcodima(\Omega^c\cap B(w,r))=0$
 whenever $w\in\Omega^c$, and thus in particular
 $\codimh(\Omega^c\cap B(w,r))<p$ in accordance with Theorem~\ref{thm:KZ},
 this example shows that $\codimh$ cannot be replaced with $\ucodima$ in the theorem:
 Indeed, we see that for all balls $B$ centered at the origin, 
 e.g.\ for $B=B(0,1/2)$, we have
 $\ucodima(\Omega^c\cap B)=n$ (i.e.\ $\ldima(\Omega^c\cap B)=0$), since for all $j\ge 3$ the
 set $\Omega^c\cap 2^{j}B_j\sub B$ can be covered by the ball $B_j$,
 and here the ratio of the radii of the balls has no positive lower bound.
 Hence neither of the estimates $\ucodima(\Omega^c\cap 2B)<p$ and 
 $\lcodima(\Omega^c\cap B)>p$ holds here when $1<p<n$, even though $\Omega$
 admits a $p$-Hardy inequality. We conclude that in general, the bound $\codimh(2B_0\cap \Omega^c)<p-\beta$ is optimal in Theorem~\ref{thm:KZ}, and clearly the same conclusion holds for Theorem~\ref{thm:dich for bdry} as well. Thus point~(i) is established.

 For point~(ii), we notice that $\Omega^c$ is not \emph{uniformly
 perfect}, that is, there are relatively large annuli around the
 balls $B_j$ which do not intersect $\Omega^c$. Since uniform perfectness
 of $\Omega^c$ is equivalent to the validity of an $n$-Hardy inequality in 
 $\Omega\sub \R^n$ (see~\cite{KS}), we conclude that $\Omega$ does not
 admit an $n$-Hardy inequality.
 On the other hand, $\codimh(\Omega^c\cap B(w,r))=0<n$ whenever $w\in\Omega^c$ and $r>0$,
 and so this example shows
 that the uniformity provided by the upper Assouad codimension in the
 sufficient conditions of Theorem~\ref{thm:main fat} and 
 Proposition~\ref{prop:thick n thin} is essential,
 and can not be replaced with the condition that $\codimh(\Omega^c\cap B)<p-\beta$
 for all balls centered at $\Omega^c$. This yields point~(ii).
\end{example}

Let us next take a look at
the requirement $p-\beta>1$ in Theorems~\ref{thm:main}
and~\ref{thm:main fat}.
It was already mentioned in the Introduction that
for Theorem~\ref{thm:main fat}, the unit ball $B=B(0,1)\sub\R^n$ 
shows the necessity of this condition,
since $\ucodima(\R^n\setminus B) = 0$, 
but $B$ admits $(p,\beta)$-Hardy inequalities only when $p-\beta>1$.
In fact, it is now understood that in the case $p-\beta\le 1$ it is
the thickness of the boundary (rather than the complement) that
plays a role in Hardy inequalities. For instance, the planar domain $\Omega$
bounded by the usual von Koch -showflake curve of dimension
$\lambda=\log 4/\log 3$ admits a $(p,\beta)$-Hardy inequality
if (and only if) $\beta<p-2+\lambda$, i.e.\ exactly when
$p-\beta>\ucodima(\bdry\Omega)$ (cf.~\cite{kole}).
However, the requirement $\ucodima(\bdry\Omega)<p-\beta$ alone 
is not sufficient for a $(p,\beta)$-Hardy inequality if $\ucodima(\bdry\Omega)<1$,
as is shown by~\cite[Examples~7.3 and~7.4]{kole}, but
certain accessibility conditions are required in addition;
cf.~\cite{kole,LPAMS2} and see also Remark~\ref{rmk:direct}.

In the case of Theorem~\ref{thm:main},
the unbounded domain $\wtilde\Omega^s$ indicated at the end 
of~\cite[Example~6.3]{LMM} serves as an example
where $1=\lcodima(\R^n\setminus\wtilde\Omega^s)$, but the domain does not 
admit any $(p,\beta)$-Hardy inequalities
when $p-\beta \le 1$. 
Nevertheless, all known counterexamples here are such that
$\lcodima(\Omega^c)\le 1$ (and thus in the examples in $\R^n$ 
we have $\dima(\Omega^c)\ge n-1$),
and so it could be asked if the requirement $p-\beta>1$ could actually be
removed (or weakened) if $\lcodima(\Omega^c) > 1$.
Under additional accessibility conditions 
Hardy inequalities can be obtained 
in the range $p-\beta\le 1$ in the case of thin 
boundaries as well; see~\cite{LMM} for the Euclidean case. 

Finally, the unboundedness of $X$ in Theorem~\ref{thm:main} can not be relaxed either, 
as the simple example $X=[-1,1]^2$, $\Omega=X\setminus\{0\}$ shows. Namely, here we can
consider functions $u_j\in\Lip_0(\Omega)$ 
which have value one in $X\setminus B(0,2^{-j})$ and $|\nabla u_j|\simeq 2^j$ 
in $B(0,2^{-j})\setminus B(0,2^{-j-1})$ (and $|\nabla u_j|$ vanishes elsewhere).
These functions show that $(p,\beta)$-Hardy
inequalities fail whenever $p-\beta\le n = \lcodima(\Omega^c)$.
On the other hand, if $\Omega=\R^2\setminus B(0,1)$,
then $\ucodima(\Omega^c)=0$ but $\Omega$ does not admit a $(2,0)$-Hardy inequality,
and this shows that it is  
essential in Theorem~\ref{thm:main fat} that the complement 
of an unbounded $\Omega$ is unbounded as well.

\subsection*{Acknowledgements}

Part of the research leading to this work was conducted when the author was visiting 
the Department of Mathematical Sciences of the University of Cincinnati in February 2013. 
The author wishes to express his gratitude to Nageswari Shanmugalingam for fruitful 
discussions, and the whole department for their hospitality. 
Thanks are also due to Antti K\"aenm\"aki for discussions related to Section~\ref{sect:fat},
and to the anonymous referee for many useful comments.


\begin{thebibliography}{99}

\bibitem{A}
{\sc H.\ Aikawa},
Quasiadditivity of Riesz capacity,
{\em Math.\ Scand.\ }69 (1991), no.~1, 15--30. 

\bibitem{A2}
{\sc H.\ Aikawa},
Quasiadditivity of capacity and minimal thinness,
{\em Ann.\ Acad.\ Sci.\ Fenn.\ Ser.\ A I Math.\ }18 (1993), no.~1, 65--75. 

\bibitem{AE}
{\sc H.\ Aikawa and M.\ Ess\'en},
Potential theory---selected topics,
Lecture Notes in Mathematics, 1633, Springer-Verlag, Berlin, 1996.


\bibitem{BB}
{\sc A.\ Bj\"orn and J.\ Bj\"orn},
Nonlinear potential theory on metric spaces,
EMS Tracts in Mathematics, 17, 
European Mathematical Society (EMS), Z\"urich, 2011. 

\bibitem{BBL}
{\sc A.\ Bj\"orn, J.\ Bj\"orn and J.\ Lehrb\"ack},
Sharp capacity estimates for annuli in weighted $\R^n$ and in metric spaces,
{\em Math.\ Z.\ }(in press), doi:10.1007/s00209-016-1797-4

\bibitem{BBS}
{\sc A.\ Bj\"orn, J.\ Bj\"orn and N.\ Shanmugalingam},
Sobolev extensions of H\"older continuous and characteristic functions on metric spaces,
{\em Canadian J.\ Math.\ }59 (2007), no.~6, 1135--1153.


\bibitem{BMS}
{\sc J.\ Bj\"orn, P.\ MacManus and N.\ Shanmugalingam}, 
Fat sets and pointwise boundary estimates for $p$-harmonic functions in metric spaces, 
{\em J.\ Anal.\ Math.\ }85  (2001), 339--369.


\bibitem{DV}
{\sc B.\ Dyda and A.V.\ Vähäkangas}, 
A framework for fractional Hardy inequalities, 
{\em Ann.\ Acad.\ Sci.\ Fenn. Math.\ }39 (2014), no.~2, 675--689.


\bibitem{F}
{\sc K.\ Falconer},
Fractal Geometry. Mathematical foundations and applications, 
John Wiley \& Sons, Ltd., Chichester, 1990.

\bibitem{Fraser}
{\sc J.~Fraser},
Assouad type dimensions and homogeneity of fractals,
{\em Trans.\ Amer.\ Math.\ Soc.} (to appear) arXiv:1301.2934

\bibitem{geh}
{\sc F.W.\ Gehring},  
The $L\sp{p}$-integrability of the partial derivatives of a quasiconformal mapping,
{\em Acta Math.\ }130 (1973), 265--277.


\bibitem{haj}
{\sc P.\ Haj\l asz},
Pointwise Hardy inequalities,
{\em Proc.\ Amer.\ Math.\ Soc.\ }127 (1999), no.~2, 417--423.

\bibitem{HaK}
{\sc P.\ Haj\l asz and P.\ Koskela},
Sobolev met Poincar\'e,
{\em Mem.\ Amer.\ Math.\ Soc.\ }145  (2000),  no.~688.

\bibitem{HLP}
{\sc G.H.~Hardy, J.E.~Littlewood and G.~P\'olya},
Inequalities, 2nd ed, 
Cambridge University Press, Cambridge, 1952.

\bibitem{HEI}
{\sc J.\ Heinonen},
Lectures on analysis in metric spaces,
Universitext, Springer-Verlag, New York, 2001.

\bibitem{HKST}
{\sc J.~Heinonen, P.~Koskela, N.~Shanmugalingam  and J.~T. Tyson}
Sobolev spaces on metric measure spaces: an approach based on
upper gradients,
New Mathematical Monographs, Cambridge University Press (forthcoming).


\bibitem{IV2}
{\sc L.\ Ihnatsyeva and A.V.\ Vähäkangas}, 
Hardy inequalities in Triebel--Lizorkin spaces II. Aikawa dimension, 
\emph{Ann.\ Mat.\ Pura Appl.\ (4)}, to appear.


\bibitem{KLV} {\sc A.\ K\"aenm\"aki, J.\ Lehrb\"ack and M.\ Vuorinen},
{Dimensions, Whitney covers, and tubular neighborhoods},
\emph{Indiana Univ.\ Math.\ J.\  } 62 (2013), No.~6, 1861--1889.


\bibitem{KeZ}
{\sc S.\ Keith and X.\ Zhong},
The Poincar\'e inequality is an open ended condition,
\emph{Ann.\ of Math.\ (2)} 167 (2008), no.~2, 575--599.

\bibitem{KLT}
{\sc R.\ Korte, J.\ Lehrb\"ack and H.\ Tuominen},
The equivalence between pointwise Hardy inequalities and uniform fatness, 
{\em Math.\ Ann.\ }351 (2011), no.~3, 711--731.

\bibitem{KS}
{\sc R.\ Korte and N.~Shanmugalingam},
Equivalence and self-improvement of $p$-fatness and Hardy's
inequality, and association with uniform perfectness,
{\em Math.\ Z.\ }264 (2010), no.~1, 99--110.  


\bibitem{kole} {\sc P.\ Koskela and J.\ Lehrb\"ack},
{Weighted pointwise Hardy inequalities},
\emph{J.\ London Math.\ Soc.\ }79 (2009), no.~3, 757--779.

\bibitem{KZ}
{\sc P.\ Koskela and X.\ Zhong},
Hardy's inequality and the boundary size, 
{\em Proc.\ Amer.\ Math.\ Soc.\ }131  (2003),  no.~4, 1151--1158.


\bibitem{Larman}
{\sc D.G.~Larman},
A new theory of dimension,
{\em Proc.\ London Math.\ Soc.\ }17 (1967), 178--192.


\bibitem{LMM} 
{\sc J.\ Lehrb\"ack}, 
Weighted Hardy inequalities and the size of the boundary,
\emph{Manuscripta Math.\ }127 (2008), no.~2, 249--273.

\bibitem{lesi}
{\sc J.\ Lehrb\"ack},
Self-improving properties of weighted Hardy inequalities, 
{\em Adv.\ Calc.\ Var.\ }1 (2008), no.~2, 193--203.

\bibitem{lene} 
{\sc J.\ Lehrb\"ack},
{Necessary conditions for weighted pointwise Hardy inequalities},
\emph{Ann.\ Acad.\ Sci.\ Fenn.\ Math.\ }34 (2009), no.~2, 437--446.

\bibitem{LPAMS2} 
{\sc J.\ Lehrb\"ack}, 
Weighted Hardy inequalities beyond Lipschitz domains,
{\em Proc.\ Amer.\ Math.\ Soc.\ }142 (2014), no.~5, 1705--1715.

\bibitem{LS}
{\sc J.\ Lehrb\"ack and N.\ Shanmugalingam},
Quasiadditivity of variational capacity, 
{\em Potential Anal.} 40 (2014), no.~3, 247--265.

\bibitem{LT}
{\sc J.\ Lehrb\"ack and H.\ Tuominen},
A note on the dimensions of Assouad and Aikawa, 
{\em J.\ Math.\ Soc.\ Japan} 65 (2013), no.~2, 343--356.   

\bibitem{Lewis}
{\sc J.\ Lewis},
Uniformly fat sets, 
{\em Trans.\ Amer.\ Math.\ Soc.}  308 (1988), no.~1, 177--196. 

\bibitem{luukas}
{\sc J.\ Luukkainen},
Assouad dimension: antifractal metrization, porous sets, and homogeneous measures,  
{\em J.\ Korean Math.\ Soc.\ }35  (1998),  no.~1, 23--76.

\bibitem{mat}
{\sc P.\ Mattila},
Geometry of sets and measures in Euclidean spaces, 
Cambridge Studies in Advanced Mathematics,
vol. 44, Cambridge Univ. Press, Cambridge, 1995.

\bibitem{SSW}
{\sc S.\ Secchi, D.\ Smets and M.\ Willem},
Remarks on a Hardy-Sobolev inequality,
{\em C.\ R.\ Math.\ Acad.\ Sci.\ Paris} 336 (2003), no.~10, 811--815. 

\bibitem{ST}
{\sc J.O.\ Str\"omberg and A.\ Torchinsky}, 
Weighted Hardy spaces. 
Lecture Notes in Mathematics, 1381. 
Springer-Verlag, Berlin, 1989. 

\bibitem{W}
{\sc A.\ Wannebo},
Hardy inequalities.
{\em Proc.\ Amer.\ Math.\ Soc.\ }{109} (1990), no.~1, 85--95. 

\bibitem{yang}
{\sc Q.H.\ Yang}, 
Hardy type inequalities related to Carnot--Carath\'eodory distance on the Heisenberg group. 
{\em Proc. Amer. Math. Soc.} 141 (2013), no.~1, 351--362.

\end{thebibliography}
\end{document}